\newcommand{\Bb}{\mathcal{B}}
\newcommand{\Hh}{\mathcal{H}}
\newcommand{\Kk}{\mathcal{K}}
\newcommand{\Oo}{\mathcal{O}}
\newcommand{\Tt}{\mathcal{T}}
\newcommand{\TT}{\mathbb{T}}
\newcommand{\lsp}{\operatorname{span}}
\newcommand{\clsp}{\overline{\lsp}}
\numberwithin{equation}{section}
\theoremstyle{plain}
\newtheorem{theorem}{Theorem}[section]
\newtheorem{corollary}[theorem]{Corollary}
\newtheorem{proposition}[theorem]{Proposition}
\newtheorem{lemma}[theorem]{Lemma}
\theoremstyle{definition}
\newtheorem{definition}[theorem]{Definition}
\theoremstyle{remark}
\newtheorem{remark}[theorem]{Remark}
\newtheorem{notation}{Notation}
\newcommand{\MCE}{\operatorname{MCE}}
\newcommand{\ap}{\operatorname{ap}}
\begin{document}
\title{Co-universal $C^{\ast}$-algebras associated to aperiodic $k$-graphs}
\author{Sooran Kang}
\address{Sooran Kang, Aidan Sims\\ School of Mathematics and
Applied Statistics  \\
University of Wollongong\\
NSW  2522\\
AUSTRALIA} \email{sooran,asims@uow.edu.au}

\keywords{Graph $C^*$-algebra, $C^*$-algebra, higher-rank graph, Cuntz-Krieger algebra,
co-universal}
\date{18 August 2011}
\subjclass{Primary 46L05}

\author{Aidan Sims}

\begin{abstract}
We construct a representation of each finitely aligned aperiodic $k$-graph $\Lambda$ on the Hilbert
space $\Hh^{\ap}$ with basis indexed by aperiodic boundary paths in $\Lambda$. We show that the
canonical expectation on $\Bb(\Hh^{\ap})$ restricts to an expectation of the image of this
representation onto the subalgebra spanned by the final projections of the generating partial
isometries. We then show that every quotient of the Toeplitz algebra of the $k$-graph admits an
expectation compatible with this one. Using this, we prove that the image of our representation,
which is canonically isomorphic to the Cuntz-Krieger algebra, is co-universal for
Toeplitz-Cuntz-Krieger families consisting of nonzero partial isometries.
\end{abstract}


\maketitle

\section{Introduction}

A directed graph is a quadruple $(E^0, E^1, r, s)$ where $E^0$ and $E^1$ are countable sets, whose
elements are thought of as vertices and edges respectively, and $r$ and $s$ are maps from $E^1$ to
$E^0$ which indicate the directions of the edges: we think of each $e \in E^1$ as
an arrow pointing from $s(e)$ to $r(e)$. Associated to each graph there are a number of
$C^*$-algebras, the two most prominent being the Toeplitz algebra $\Tt C^*(E)$ and the
Cuntz-Krieger algebra $C^*(E)$. These are far-reaching generalizations of the Toeplitz-Cuntz
algebras $\Tt\Oo_n$ and the Cuntz algebras $\Oo_n$ \cite{Cuntz:CMP77}, and have been much studied
in recent years. One way to think of $\Tt C^*(E)$ and $C^*(E)$ is as follows: $\Tt C^*(E)$ is the
image of the natural analogue of a left-regular representation of a graph on the Hilbert space
$\ell^2(E^*)$ with basis indexed by paths in the graph, and $C^*(E)$ is the quotient of $\Tt
C^*(E)$ by its intersection with $\Kk(\ell^2(E^*))$ \cite{HongSzyma'nski:JMSJ04}.

However, this spatial viewpoint is not the traditional one. In the seminal papers \cite{KPR, KPRR}
Kumjian, Pask, Raeburn and Renault considered graphs $E$ in which $r^{-1}(v)$ is both finite and
nonempty for every $v \in E^0$. They defined $C^{\ast}(E)$ as a groupoid $C^{\ast}$-algebra, and then
 showed how $C^*(E)$ could be described as the $C^*$-algebra which is universal for a system of generators and relations
 which have come to be known as a Cuntz-Krieger family: a collection $\{p_v : v \in E^0\}$ of mutually
 orthogonal projections, and a family $\{s_e
: e \in E^1\}$ of partial isometries such that $s^*_e s_e = p_{s(e)}$ and $p_v = \sum_{r(e) = v}
s_e s^*_e$\footnote{These relations look a little different from those of \cite{KPR, KPRR}: the
roles of $r$ and $s$ are reversed. See \cite{Raeburn:cbms} for an explanation.} for each $v\in E^0$
and $e\in E^1$. The Toeplitz algebra is universal for elements $\{q_v : v \in E^0\}$ and $\{t_e : e
\in E^1\}$  such that $t^*_e t_e = q_{s(e)}$ and $q_v \ge \sum_{r(e) = v} t_e t^*_e$. It is
standard that both algebras are spanned by elements of the form $t_\mu t^*_\nu$ where $\mu,\nu$ are
directed paths in $E$ with the same source, and $t_\mu = t_{\mu_1}t_{\mu_2} \dots t_{\mu_n}$ is the
product of the generators associated to the edges occuring in the path $\mu$. This universal
groupoid $C^{\ast}$-algebra has now become standard \cite{BatesPaskEtAl:NYJM00, Katsura:TAMS04,
Raeburn:cbms}.

Two of key theorems about graph $C^*$-algebras are the uniqueness theorems. The first of these
is an Huef and Raeburn's gauge-invariant uniqueness theorem. By virtue of its universal
property, $C^*(E)$ carries an action $\gamma$, called the gauge action, of $\TT$ satisfying
$\gamma_z(s_e) = zs_e$ for all $e \in E^1$. The gauge-invariant uniqueness theorem says that
any two Cuntz-Krieger families consisting of nonzero partial isometries and carrying $\TT$-actions
compatible with the gauge action generate isomorphic $C^*$-algebras. The second uniqueness theorem,
the one in which we are interested in the present paper, is called the Cuntz-Krieger
uniqueness theorem. It says that if $E$ satisfies an appropriate aperiodicity hypothesis, then any two Cuntz-Krieger
families consisting of nonzero partial isometries generate isomorphic $C^*$-algebras.

In recent years, a number of generalizations of graph algebras have arisen, one of which is
$k$-graphs and their $C^*$-algebras, first introduced by Kumjian and Pask \cite{KP1}. A $k$-graph
is a $k$-dimensional analogue of a directed graph, and the $k$-graph $C^*$-algebra is the universal
$C^*$-algebra generated by a family of generators and relations analogous to the Cuntz-Krieger
relations for directed graphs. However, the complexity of these relations for the finitely aligned
$k$-graphs of \cite{RSY2} makes them cumbersome to verify in examples. In particular, while the
relations which determine the Toeplitz algebra remain fairly natural --- they are those which arise
in the left-regular representation --- the relation characterizing the quotient map from the
Toeplitz algebra to the Cuntz-Krieger algebra is more complicated.

Katsura's recent work on $C^*$-algebras associated to Hilbert modules \cite{K1, K2} suggests a more
elegant approach, which we call a co-universal property. The idea is that rather than specifying
the Cuntz-Krieger algebra as the universal object for a complicated set of relations (which include
the Toeplitz ones), we aim to specify it as the co-universal object for nonzero generators
satisfying the Toeplitz relations. Specifically, given a $k$-graph satisfying the hypotheses of the
Cuntz-Krieger uniqueness theorem, we aim to show that there exists a $C^*$-algebra generated by a
Toeplitz-Cuntz-Krieger family consisting of nonzero partial isometries which is the smallest such
algebra in the sense that it occurs as a quotient of any other $C^*$-algebra generated by a
Toeplitz-Cuntz-Krieger family consisting of nonzero partial isometries.

In this paper we construct such a co-universal algebra for every aperiodic finitely aligned
$k$-graph; the Cuntz-Krieger uniqueness theorem is then a corollary. Our basic technique is fairly
versatile and we believe that it is interesting in its own right. We construct a spatial
representation $\pi$ of $C^*(\Lambda)$ on a Hilbert space $\Hh^{\ap}$ for which the canonical
expectation from $\Bb(\Hh^{\ap})$ onto its diagonal subalgebra implements a map $\Phi_\pi$ on
$\pi(C^*(\Lambda))$ which sends each $\pi(s_\mu s^*_\nu)$ to $\delta_{\mu,\nu} \pi(s_\mu s^*_\mu)$.
It follows that $\Phi_\pi$ is a faithful conditional expectation. We then use algebraic arguments
to show that given any Toeplitz-Cuntz-Krieger family $\{t_\lambda : \lambda \in \Lambda\}$ for
$\Lambda$, there is an expectation $\Phi_t$ on $C^*(\{t_{\lambda}:\lambda\in\Lambda\})$ which
satisfies $\Phi_t(t_\mu t^*_\nu) = \delta_{\mu,\nu} t_\mu t^*_\mu$. From this, we deduce that the
image of the Toeplitz algebra under $\pi$ is co-universal as described above.

\section{Preliminaries}
Let $\mathbb{N}=\{0,1,\dots\}$ denote the monoid of natural numbers under addition, and regard
$\mathbb{N}^k$ an additive semigroup with identity $0$. We write $e_1,\dots,e_k$ for the generators
of $\mathbb{N}^k$, and $n_i$ for the $i$th coordinate of $n\in\mathbb{N}^k$. For
$m,n\in\mathbb{N}^k$, we say that $m\le n$ if $m_i\le n_i$ for each $i$. We write $m\vee n$ for the
coordinatewise maximum of $m$ and $n$.

A \textit{$k$-graph} $(\Lambda, d)$ consists of a countable small category $\Lambda$ together with
a \textit{degree functor} $d:\Lambda\rightarrow \mathbb{N}^k$ satisfying the \textit{factorization
property} : for every $\lambda \in \Lambda$ and $m,n\in\mathbb{N}^k$ with $d(\lambda)=m+n$, there
are unique elements $\mu,\nu\in\Lambda$ such that $\lambda=\mu\nu$ and $d(\mu)=m$, $d(\nu)=n$.
Elements $\lambda\in\Lambda$ are called \textit{paths}. For $n\in\mathbb{N}^k$, we write
${\Lambda}^n:=d^{-1}(n)$. The factorization property allows us to identify $\text{Obj}(\Lambda)$
with ${\Lambda}^0$. So we may regard the codomain and domain maps in $\Lambda$ as functions
$r,s:\Lambda\rightarrow\Lambda^0$. For $v\in\Lambda^0$, $v\Lambda=\{\lambda\in\Lambda:
r(\lambda)=v\}$ and $\Lambda v= \{\lambda\in\Lambda : s(\lambda)=v\}$.

\begin{notation}
For $\lambda\in\Lambda$ and $m\le n\le d(\lambda)$, by the factorization property we can express
$\lambda$ uniquely as $\lambda=\lambda_1\lambda_2\lambda_3$ with $d(\lambda_1)=m, d(\lambda_2)=n-m$
and $d(\lambda_3)=d(\lambda)-n$. We denote $\lambda_2$ by $\lambda(m,n)$, so
$\lambda_1=\lambda(0,m)$ and $\lambda_3=\lambda(n,d(\lambda))$.
\end{notation}

A graph morphism between two $k$-graphs $(\Lambda, d_{\Lambda})$ and $(\Gamma,d_{\Gamma})$ is a
functor $x:\Lambda\rightarrow\Gamma$ such that $d_{\Gamma}(x(\lambda))=d_{\Lambda}(\lambda)$ for
all $\lambda\in\Lambda$.

Let $k\in\mathbb{N}$ and let $m\in(\mathbb{N}\cup\{\infty\})^k$. Then $(\Omega_{k,m},d)$ is the
$k$-graph with
\[\text{Obj}(\Omega_{k,m}):=\{p\in\mathbb{N}^k:p\le m\},\]
\[\text{Mor}(\Omega_{k,m}):=\{(p,q)\in\text{Obj}(\Omega_{k,m})\times\text{Obj}(\Omega_{k,m}):p\le q\},\]
\[r(p,q):=p,\;\;s(p,q):=q\;\;d(p,q):=q-p.\]
If $x:\Omega_{k,m}\rightarrow\Lambda$ is a graph morphism and $n\in\mathbb{N}^k$ with $n\le m$, then there is a graph morphism
${\sigma}^n(x):\Omega_{k,m-n}\rightarrow\Lambda$ determined by ${\sigma}^n(x)(p,q):=x(n+p,n+q)$.
Also, if $x:\Omega_{k,m}\rightarrow\Lambda$ is a graph morphism and $\lambda\in\Lambda r(x)$, then
there is a unique graph morphism $\lambda x:\Omega_{k,m+d(\lambda)}\rightarrow\Lambda$ such that
$(\lambda x)(0,d(\lambda))=\lambda$ and ${\sigma}^{d(\lambda)}(\lambda x)=x$.

Let $(\Lambda, d)$ be a $k$-graph. For $\mu,\nu\in\Lambda$, we say that $\lambda$ is a
\textit{minimal common extension} of $\mu$ and $\nu$ if $d(\lambda)=d(\mu)\vee d(\nu)$ and
$\lambda=\mu{\mu}'=\nu{\nu}'$ for some ${\mu}',{\nu}'\in\Lambda$. We write $\MCE(\mu,\nu)$ for the
set of all minimal common extensions of $\mu$ and $\nu$. More generally, for a finite subset $F$ of
${\Lambda}$, let
\[\textstyle
    \MCE(F):=\{\lambda\in\Lambda: d(\lambda)=\bigvee_{\alpha\in F}d(\alpha)\;\text{and}\;\lambda(0,d(\alpha))=\alpha\;\text{for all}\;\alpha\in F\},
\]
and let ${\vee F:=\bigcup_{G\subset F}\MCE(G)}$.

We say that $\Lambda$ is \textit{finitely aligned} if $\text{MCE}(\lambda,\mu)$ is finite
(possibly empty) for all $\lambda,\mu\in\Lambda$.
Fix $v\in{\Lambda}^0$ and $E\subset v\Lambda$. We say that $E$ is \textit{exhaustive} if for each
$\mu\in v\Lambda$ there exists $\lambda\in E$ such that $\MCE(\mu,\lambda)\ne\emptyset$. If
$|E|<\infty$, we say that $E$ is \textit{finite exhaustive}. Define $\text{FE}(\Lambda)$ to be the
set of finite exhaustive sets of $\Lambda$ and for each $v\in{\Lambda}^0$, define
$v\text{FE}(\Lambda)$ to be the set of finite exhaustive sets whose elements all have range $v$.

\begin{definition}
We say that a $k$-graph is \textit{aperiodic} if, for all $\mu,\nu\in\Lambda$ with $s(\mu)=s(\nu)$,
there exists $\tau\in s(\mu)\Lambda$ such that $\MCE(\mu\tau,\nu\tau)=\emptyset$.
\end{definition}
\begin{remark}
There have been many versions of ``aperiodicity'' proposed for $k$-graphs. For an account of the
relationship between them, see \cite{LS,RoS1}. The version presented here first appeared in
\cite{LS}.
\end{remark}

\begin{lemma}[\cite{LS}, Lemma 4.4]\label{APT}
Let $(\Lambda,d)$ be an aperiodic finitely aligned $k$-graph, and fix $v\in{\Lambda}^0$ and a
finite subset $H$ of $\Lambda v$. Then there exists $\tau\in v\Lambda$ such that
$\MCE(\mu\tau,\nu\tau)=\emptyset$ for all distinct $\mu,\nu \in H$.
\end{lemma}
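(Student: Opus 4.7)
The plan is to reduce the multi-pair statement to iterated applications of the single-pair aperiodicity definition. The essential observation is a right-extension monotonicity for emptiness of $\MCE$: if $\alpha,\beta\in\Lambda$ satisfy $\MCE(\alpha,\beta)=\emptyset$ and $\gamma\in s(\alpha)\Lambda=s(\beta)\Lambda$, then $\MCE(\alpha\gamma,\beta\gamma)=\emptyset$. To see this, suppose for contradiction that $\lambda\in\MCE(\alpha\gamma,\beta\gamma)$. Then $d(\lambda)=(d(\alpha)\vee d(\beta))+d(\gamma)$, so by the factorization property we may write $\lambda=\lambda'\lambda''$ with $d(\lambda')=d(\alpha)\vee d(\beta)$. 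Since $d(\alpha)\le d(\lambda')$ and $\lambda(0,d(\alpha))=(\alpha\gamma)(0,d(\alpha))=\alpha$, nesting of factorizations gives $\lambda'(0,d(\alpha))=\alpha$, and symmetrically $\lambda'(0,d(\beta))=\beta$. Thus $\lambda'\in\MCE(\alpha,\beta)$, contradicting emptiness.

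With this in hand, enumerate the pairs $(\mu,\nu)$ of distinct elements of $H$ as $(\mu_1,\nu_1),\dots,(\mu_N,\nu_N)$ and construct $\tau$ by iteration. Set $\tau^{(0)}:=v$, the identity morphism at $v$. Given $\tau^{(i)}\in v\Lambda$ satisfying $\MCE(\mu_j\tau^{(i)},\nu_j\tau^{(i)})=\emptyset$ for every $j\le i$, apply the definition of aperiodicity to the pair $\mu_{i+1}\tau^{(i)}$ and $\nu_{i+1}\tau^{(i)}$ (which share source $s(\tau^{(i)})$) to obtain $\sigma\in s(\tau^{(i)})\Lambda$ with $\MCE(\mu_{i+1}\tau^{(i)}\sigma,\nu_{i+1}\tau^{(i)}\sigma)=\emptyset$, and set $\tau^{(i+1)}:=\tau^{(i)}\sigma$. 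The monotonicity observation, applied with $\gamma=\sigma$, preserves the inductive hypothesis for the earlier pairs, while the $(i+1)$st pair is handled by the choice of $\sigma$. After $N$ steps, $\tau:=\tau^{(N)}$ satisfies the required conclusion.

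The main obstacle is the monotonicity lemma itself, where one must carefully use the factorization property to descend from a hypothetical minimal common extension of $\alpha\gamma$ and $\beta\gamma$ to one of $\alpha$ and $\beta$ of the correct degree and with the correct prefixes; once this is in hand the recursion is straightforward bookkeeping. Source matching causes no difficulty because every element of $H$ has source $v$, so at every stage the pair under consideration has common source $s(\tau^{(i)})$ and the next extension $\sigma$ is drawn from $s(\tau^{(i)})\Lambda$ precisely so that the required concatenations are well defined.
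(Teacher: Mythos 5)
Your proof is correct: the right-multiplication stability of $\MCE(\alpha,\beta)=\emptyset$ is established properly via the factorization property, and the induction over the finitely many distinct pairs in $H$ (noting that $\mu\tau^{(i)}\ne\nu\tau^{(i)}$ whenever $\mu\ne\nu$, so aperiodicity applies at each step) gives the result. The paper itself offers no proof, simply citing \cite[Lemma~4.4]{LS}, and your argument is essentially the standard one given there, so nothing further is needed.
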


\begin{definition}
Let $(\Lambda, d)$ be a finitely aligned $k$-graph. A $k$-graph morphism
$x:\Omega_{k,m}\rightarrow\Lambda$ is called a \textit{boundary path} if for all $n\in\mathbb{N}^k$
with $n\le d(x)$ and for all $E\in x(n)\text{FE}(\Lambda)$, there exists $m\le d(x)-n$ such that
$x(n,n+m)\in E$. We write $\partial\Lambda$ for the set of all boundary paths, and for
$v\in{\Lambda}^0$, write $v(\partial\Lambda)$ for $\{x\in\partial\Lambda:r(x)=v\}$. Define the set
of \textit{aperiodic boundary paths} ${\partial\Lambda}^{{\ap}}$ by
\[{\partial\Lambda}^{{\ap}}:=\{x\in\partial\Lambda:m\ne n\Longrightarrow {\sigma}^m(x)\ne{\sigma}^n(x)\;\text{for all}\;m,n\le d(x)\}.\]
\end{definition}

\begin{lemma}\label{APP}
Let $(\Lambda,d)$ be a finitely aligned $k$-graph and let $x\in{\partial\Lambda}^{{\ap}}$.
\begin{enumerate}
\item If $m\in\mathbb{N}^k$ and $m\le d(x)$, then ${\sigma}^m(x)\in{\partial\Lambda}^{{\ap}}$.
\item If $\lambda\in\Lambda r(x)$, then $\lambda x\in{\partial\Lambda}^{{\ap}}$.
\end{enumerate}
\end{lemma}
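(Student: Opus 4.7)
For part (1), I would use the identity $\sigma^p(\sigma^m(x)) = \sigma^{p+m}(x)$ to transfer each property directly from $x$. Setting $y = \sigma^m(x)$: given $n \le d(y)$ and $E \in y(n)\operatorname{FE}(\Lambda)$, we have $y(n) = x(m+n)$, so the boundary property of $x$ at $m + n$ produces some $m'$ with $x(m+n, m+n+m') \in E$, and rewriting this equality as $y(n, n+m')$ completes the verification. For aperiodicity, distinct $p, q \le d(y)$ give distinct $m + p, m + q \le d(x)$, so $\sigma^{m+p}(x) \ne \sigma^{m+q}(x)$ by aperiodicity of $x$, which rewrites as $\sigma^p(y) \ne \sigma^q(y)$.

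For part (2), write $y = \lambda x$. Aperiodicity is handled by a shift trick: suppose $\sigma^p(y) = \sigma^q(y)$ with $p \ne q$; comparing the degrees of the two sides forces $p_i = q_i$ in every coordinate with $d(x)_i < \infty$. Choosing $r_i := \max\bigl(0,\, d(\lambda)_i - p_i,\, d(\lambda)_i - q_i\bigr)$ yields $p+r, q+r \ge d(\lambda)$, and the coordinate observation just made guarantees $p+r, q+r \le d(y)$. Applying $\sigma^r$ to both sides and using $\sigma^s(\lambda x) = \sigma^{s - d(\lambda)}(x)$ for $s \ge d(\lambda)$ then gives $\sigma^{p+r-d(\lambda)}(x) = \sigma^{q+r-d(\lambda)}(x)$ with nonzero shift difference $p - q$, contradicting aperiodicity of $x$.

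For the boundary condition in part (2), fix $n \le d(y)$ and $E \in y(n)\operatorname{FE}(\Lambda)$, and set $\tilde n = n \vee d(\lambda)$, $\lambda' = y(n, \tilde n)$ and $w = y(\tilde n) = x(\tilde n - d(\lambda))$. The natural candidate finite exhaustive set at $w$ is
\[
F := \{\beta \in w\Lambda : \lambda'\beta \in \MCE(\lambda', \alpha) \text{ for some } \alpha \in E\},
\]
which is finite by finite alignment. To verify exhaustiveness, given $\mu \in w\Lambda$, exhaustiveness of $E$ at $r(\lambda')$ produces $\eta \in \MCE(\lambda'\mu, \alpha)$ for some $\alpha \in E$; truncating $\eta$ to degree $d(\lambda') \vee d(\alpha)$ exhibits $\lambda'\beta \in \MCE(\lambda',\alpha)$ with $\beta \in F$, and the factorization of $\eta$ past $\lambda'$ supplies a common extension of $\beta$ and $\mu$, so $\MCE(\beta, \mu) \ne \emptyset$. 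The boundary property of $x$ applied at $\tilde n - d(\lambda)$ with $F$ then produces $m$ with $y(\tilde n, \tilde n + m) \in F$, so $y(n, \tilde n + m) = \lambda'\beta$ starts with some $\alpha \in E$, whence $y(n, n + d(\alpha)) = \alpha \in E$, as required. The main obstacle is the exhaustiveness check for $F$; the aperiodicity argument in part (2) is straightforward once the degree-matching observation on infinite coordinates is noted.
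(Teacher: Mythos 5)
Your proposal is correct. The aperiodicity halves match the paper's proof in substance: part (1) is the same one-line transfer via $\sigma^p(\sigma^m(x))=\sigma^{p+m}(x)$, and in part (2) you isolate exactly the paper's key observation (comparing degrees of $\sigma^p(\lambda x)=\sigma^q(\lambda x)$ forces $d(x)_i=\infty$ whenever $p_i\ne q_i$, which is what guarantees there is room to shift past $\lambda$); your uniform shift $r_i=\max(0,\,d(\lambda)_i-p_i,\,d(\lambda)_i-q_i)$ added to both $p$ and $q$ is a slightly cleaner packaging than the paper's $m=p'+((p\wedge q)\vee d(\lambda))$, $n=q'+((p\wedge q)\vee d(\lambda))$, but the two computations are the same argument. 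Where you genuinely diverge is the boundary-path property: the paper does not prove that $\sigma^m(x)$ and $\lambda x$ lie in $\partial\Lambda$ at all, but simply cites \cite[Lemma~2.4]{LS}, whereas you prove it directly. Your direct argument is sound -- the pullback of $E\in y(n)\operatorname{FE}(\Lambda)$ to the finite exhaustive set $F=\{\beta\in w\Lambda:\lambda'\beta\in\MCE(\lambda',\alpha)\text{ for some }\alpha\in E\}$ at $w=y(n\vee d(\lambda))$ is exactly the standard mechanism, and your exhaustiveness check (truncating an element of $\MCE(\lambda'\mu,\alpha)$ to degree $d(\lambda')\vee d(\alpha)$ and reading off a common extension of $\beta$ and $\mu$ from the leftover factor) goes through after a routine degree computation. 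So your route is self-contained at the cost of reproving a cited lemma; the paper's is shorter but leans on \cite{LS}. Either is acceptable.
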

\begin{proof}
Lemma 2.4 of \cite{LS} implies that each of $\sigma^m(x)$ and $\lambda x$ belong to
$\partial\Lambda$. So we just have to check that they are aperiodic.

For (1), observe that $p,q\le d(\sigma^m(x))$ and $\sigma^p(x)=\sigma^q(x)$ implies that $p+m,
q+m\le d(x)$ and $\sigma^{p+m}(x)=\sigma^{q+m}(x)$.

For (2), suppose that $p,q\le d(\lambda x)$ and $\sigma^p(\lambda x)=\sigma^q(\lambda x)$. For
$i\le k$,
\[
    d(\lambda)_i+d(x)_i-p_i=d(\sigma^p(\lambda x))_i
                           = d(\sigma^q(\lambda x))_i = d(\lambda)_i+d(x)_i-q_i.
\]
Hence, whenever $p_i\ne q_i$, we have $d(x)_i=\infty$. Let $p':=p-p\wedge q$ and $q':= q-p\wedge
q$. Since each $p_i' = p_i - \min\{p_i, q_i\}$, whenever $p'_i$ is nonzero, we have $p_i\ne q_i$
and hence $d(x)_i=\infty$. Similarly, $q'_i \not= 0$ implies $d(x)_i = \infty$. Now let $m :=
p'_i+((p\wedge q)\vee d(\lambda))$ and let $n := q'_i + ((p\wedge q)\vee d(\lambda))$. Then
$p,q,d(\lambda) \le m,n$. We claim that $m,n\le d(\lambda x)$. Indeed, since $p,q, d(\lambda) \le
d(\lambda x)$, we certainly have $(p \wedge q) \vee d(\lambda) \le d(\lambda x)$, and then since
$p'_i \not= 0 \implies d(x_i) = \infty$, it follows that $m \le d(\lambda x)$ also, and similarly
for $n$. We now calculate
\[
\sigma^{m - d(\lambda)}(x)
   = \sigma^m(\lambda x)
   = \sigma^{m - p}(\sigma^{p}(\lambda x)).
\]
A similar calculation gives $\sigma^{n - d(\lambda)}(x) = \sigma^{n - q}(\sigma^{q}(\lambda x))$.
Since $\sigma^p(\lambda x) = \sigma^q(\lambda x)$ by assumption and since
\[
m - p
   = m - p' - (p \wedge q)
   = ((p \wedge q) \vee d(\lambda)) - (p \wedge q)
   = n - q' - (p \wedge q)
   = n - q,
\]
it follows that $\sigma^{m - d(\lambda)}(x) = \sigma^{n - d(\lambda)}(x)$. Since $x \in
\partial\Lambda^{\ap}$, it follows that $m - d(\lambda) = n - d(\lambda)$, and hence
\[
0 = m - n = p' - q' = p - q.
\]
That is, $p,q \le d(\lambda x)$ and $\sigma^p(\lambda x) = \sigma^q(\lambda x)$ imply that $p = q$,
so $\lambda x$ is aperiodic as required.
\end{proof}

\begin{definition}\label{CK}
Let $(\Lambda,d)$ be a finitely aligned $k$-graph. A \textit{Toeplitz-Cuntz-Krieger
$\Lambda$-family} is a collection $\{t_{\lambda}:\lambda\in\Lambda\}$ of partial isometries in a
$C^{\ast}$-algebra satisfying
\begin{itemize}
\item[(TCK1)] $\{t_v:v\in{\Lambda}^0\}$ is a collection of mutually orthogonal projections;
\item[(TCK2)] $t_{\lambda}t_{\mu}=t_{\lambda\mu}$ whenever $s(\lambda)=r(\mu)$; and
\item[(TCK3)]
    $t^{\ast}_{\mu}t_\nu=\sum_{\mu\alpha=\nu\beta\in\MCE(\mu,\nu)}t_{\alpha}t^{\ast}_{\beta}$
    for all $\mu,\nu\in\Lambda$.
\end{itemize}
A \textit{Cuntz-Kriger $\Lambda$-family} is a Toeplitz-Cuntz-Krieger $\Lambda$-family
$\{t_{\lambda}:\lambda\in\Lambda\}$ which satisfies
\begin{itemize}
\item[(CK)] $\prod_{\lambda\in E}(t_v-t_{\lambda}t^{\ast}_{\lambda})=0$ for all
    $v\in{\Lambda}^0$ and $E\subset v\text{FE}(\Lambda)$.
\end{itemize}
\end{definition}

As in \cite{FMY}, given a finitely aligned $k$-graph $\Lambda$ there is a
$C^{\ast}$-algebra $\mathcal{T}C^{\ast}(\Lambda)$ called the \textit{Toeplitz algebra} of
$\Lambda$ generated by a Toeplitz-Cuntz-Krieger $\Lambda$-family $\{s_v:\lambda\in\Lambda\}$ which
is universal in the sense that given any other Topelitz-Cuntz-Krieger $\Lambda$-family
$\{t_{\lambda}:\lambda\in\Lambda\}$ in a $C^{\ast}$-algebra $B$, there exists a unique homomorphism
${\pi_t}:\mathcal{T}C^{\ast}(\Lambda)\rightarrow B$ such that $\pi_t(s_{\lambda})=t_{\lambda}$ for
every $\lambda\in\Lambda$.

\begin{proposition}\label{CCK}
Let $(\Lambda, d)$ be an aperiodic finitely aligned $k$-graph. Let $\{\xi_x : x \in
\partial\Lambda^{{\ap}}\}$ be the canonical basis for $\Hh^{\ap}:=\ell^2(\partial\Lambda^{{\ap}})$.
For $\lambda\in\Lambda$, define
\begin{equation}\label{rep}
    S^{\ap}_{\lambda}\xi_x:=\begin{cases}\xi_{\lambda x}\;\;\text{if}\;\;s(\lambda)=r(x),\\0\;\;\text{otherwise}.\end{cases}
\end{equation}
Then, $\{S^{\ap}_{\lambda}:\lambda\in\Lambda\}$ is a Cuntz-Krieger $\Lambda$-family on $\Hh^{\ap}$.
Furthermore, every $S^{\ap}_v$ is nonzero.
\end{proposition}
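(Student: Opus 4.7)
The plan is to verify the Toeplitz-Cuntz-Krieger relations (TCK1)--(TCK3) and the additional relation (CK) directly from the formula \eqref{rep}, and then to construct, for each vertex $v \in \Lambda^0$, an element of $v(\partial\Lambda^{\ap})$ to witness $S^{\ap}_v \ne 0$.

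I would start by computing the adjoint: $(S^{\ap}_\lambda)^*\xi_y$ equals $\xi_{\sigma^{d(\lambda)}(y)}$ when $d(\lambda) \le d(y)$ and $y(0,d(\lambda)) = \lambda$, and is zero otherwise (with Lemma \ref{APP}(1) ensuring $\sigma^{d(\lambda)}(y) \in \partial\Lambda^{\ap}$). Then $S^{\ap}_v$ is the projection onto $\clsp\{\xi_x : r(x) = v\}$, so distinct vertices yield orthogonal projections, giving (TCK1); and (TCK2) follows from $(\lambda\mu)x = \lambda(\mu x)$ together with Lemma \ref{APP}(2). For (TCK3), applying $(S^{\ap}_\mu)^*S^{\ap}_\nu$ to a basis vector $\xi_x$ returns $\xi_{\sigma^{d(\mu)}(\nu x)}$ precisely when $r(x) = s(\nu)$ and $(\nu x)(0,d(\mu)) = \mu$; in that case, setting $p = d(\mu)\vee d(\nu)$, the initial segment $(\nu x)(0,p)$ is an MCE $\mu\alpha = \nu\beta$ with $x = \beta z$, and the output equals $\xi_{\alpha z}$, matching the unique nonzero term of the sum $\sum_{\mu\alpha=\nu\beta \in \MCE(\mu,\nu)} S^{\ap}_\alpha(S^{\ap}_\beta)^*\xi_x$ (uniqueness because every $\beta$ appearing has the common degree $p - d(\nu)$, so at most one can be an initial segment of $x$). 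For (CK), fix $v$ and $E \in v\text{FE}(\Lambda)$: each factor $S^{\ap}_v - S^{\ap}_\lambda(S^{\ap}_\lambda)^*$ is the projection onto $\clsp\{\xi_x : r(x) = v,\ x \text{ does not start with }\lambda\}$, and the definition of a boundary path applied at $n = 0$ forces some $\lambda \in E$ to be an initial segment of any $x \in v\partial\Lambda$, so the product annihilates every basis vector in the range of $S^{\ap}_v$ (and trivially those outside).

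The substantive step is producing, for each $v \in \Lambda^0$, an aperiodic boundary path $x$ with $r(x) = v$. I would build $x$ as the limit of an increasing sequence $v = \lambda_0 \le \lambda_1 \le \cdots$ in $v\Lambda$ via a diagonalization whose obligations cycle through: (i) growing the degree in each coordinate designated to be infinite; (ii) meeting prescribed finite exhaustive sets at prescribed vertices along the path, so as to secure the boundary-path condition on the limit; and (iii) applying Lemma \ref{APT} to the finite set $H = \{\lambda_n(p,d(\lambda_n)) : p \le d(\lambda_n)\text{ in a prescribed window}\}$ to choose an extension that separates shifts of $x$ at distinct coordinates in that window. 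The main obstacle is item (iii): aperiodicity imposes infinitely many pairwise-separation requirements, and the essential leverage is Lemma \ref{APT}, which allows us to discharge each finite batch of these requirements in a single extension. With the standard diagonal bookkeeping over countably many obligations, the resulting $x$ lies in $v(\partial\Lambda^{\ap})$ and so $S^{\ap}_v\xi_x = \xi_x \ne 0$.
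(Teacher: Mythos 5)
Your verification of (TCK1)--(TCK3) and (CK) is correct and takes a genuinely different, more self-contained route than the paper: the paper does not check the relations by hand but instead invokes Lemma~4.6 of \cite{Sims:IUMJ06} (with $\mathcal{E}=\operatorname{FE}(\Lambda)$) to get a Cuntz--Krieger family on $\ell^2(\partial\Lambda)$, and then uses Lemma~\ref{APP} only to observe that $\ell^2(\partial\Lambda^{\ap})$ is an invariant subspace. Your direct computation of $(S^{\ap}_\mu)^*S^{\ap}_\nu$ via the initial segment $(\nu x)(0,d(\mu)\vee d(\nu))$, and your identification of the unique surviving term in the (TCK3) sum, are sound (you should also note the converse direction -- that a nonzero term $S^{\ap}_\alpha(S^{\ap}_\beta)^*\xi_x$ forces $(\nu x)(0,d(\mu))=\mu$ -- but that is immediate). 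What the paper's citation buys is brevity; what your computation buys is independence from the relative Cuntz--Krieger machinery of \cite{Sims:IUMJ06}.

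The gap is in the final step. The existence of an aperiodic boundary path with range $v$ is the real content of the nonvanishing claim, and the paper does not prove it either: it quotes \cite[Proposition~3.6]{LS}. Your diagonalization sketch is headed in the right direction (Lemma~\ref{APT} is indeed the tool that discharges each finite batch of separation requirements), but it omits the two points where the argument can actually fail. First, you must explain why a separation arranged at a finite stage survives to the limit: the point is that $\MCE(\mu\tau,\nu\tau)=\emptyset$ implies $\mu\tau$ and $\nu\tau$ have \emph{no} common extension whatsoever, and one must then check that $\sigma^p(x)=\sigma^q(x)$ would exhibit a finite segment $x(p,p+m)=x(q,q+m)$ extending both $x(p,d(\lambda_n))\tau$ and $x(q,d(\lambda_n))\tau$ -- a small but necessary computation involving the coordinates where $p_i\ne q_i$ (which force $d(x)_i=\infty$). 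Second, the boundary-path obligations cannot simply be ``prescribed'' in advance: the finite exhaustive sets that must be met are those in $x(n)\operatorname{FE}(\Lambda)$ for $n\le d(x)$, and the vertices $x(n)$ are only determined as the construction proceeds, so the queue of obligations grows with the path and needs explicit bookkeeping; your item (i) about ``coordinates designated to be infinite'' is not something one is free to choose and is not the right formulation of what must be arranged. As written, your construction is a plan rather than a proof; either carry out these two steps or, as the paper does, cite \cite[Proposition~3.6]{LS}.
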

\begin{proof}
First, we show that $S^{\ap}_{\lambda}\ne 0$ for all $\lambda\in\Lambda$. Fix $\lambda \in
\Lambda$. Since $\Lambda$ is aperiodic, \cite[Proposition~3.6]{LS} implies that there exists
$x\in{\partial\Lambda}^{{\ap}}$ with $r(x)=s(\lambda)$. Then $\lambda
x\in{\partial\Lambda}^{{\ap}}$ by Lemma~\ref{APP}. So $S^{\ap}_{\lambda}\xi_{x}=\xi_{\lambda x}\ne
0$, which implies $S^{\ap}_{\lambda}\ne 0$.

Lemma 4.6 of \cite{Sims:IUMJ06} applied with $\mathcal{E} = \text{FE}(\Lambda)$ implies that there
is a Cuntz-Krieger $\Lambda$-family on $\ell^2(\partial\Lambda)$ satisfying (\ref{rep}). Lemma
\ref{APP} shows that $\ell^2(\partial\Lambda^{\ap})$ is an invariant subspace for the corresponding
representation and the result follows.
\end{proof}

\begin{definition}\label{dfn:csmin}
Let $(\Lambda, d)$ be an aperiodic finitely aligned $k$-graph. We define $C^*_{\min}(\Lambda)$ to
be the $C^*$-subalgebra of $\Bb(\Hh^{\ap})$ generated by $\{S^{\ap}_\lambda : \lambda \in
\Lambda\}$.
\end{definition}

The notation $C^*_{\min}(\Lambda)$ will be justified in Theorem~\ref{main}.

\section{The Co-universal algebras}

\begin{definition}
Let $(\Lambda, d)$ be a finitely aligned $k$-graph. A \textit{boolean representation} of $\Lambda$
in a $C^{\ast}$-algebra $B$ is a map $q:\lambda\mapsto q_{\lambda}$ form $\Lambda$ to $B$ such that
each $q_{\lambda}$ is a projection, and
\begin{equation}\label{booleanRep}
q_{\mu}q_{\nu}=\sum_{\gamma\in\MCE(\mu,\nu)}q_{\gamma}.
\end{equation}
\end{definition}

\begin{remark}
\begin{itemize}
\item[(i)] When $\text{MCE}(\mu,\nu)=\emptyset$, (\ref{booleanRep}) is intended to mean that
    $q_{\mu}q_{\nu}=0$.
\item[(ii)] Since $\MCE(\mu,\nu)=\MCE(\nu,\mu)$ for all $\mu,\nu$, \eqref{booleanRep} implies
    that the projections in a  boolean representation of $\Lambda$ pairwise commute.
\end{itemize}
\end{remark}

\begin{lemma}\cite[Proposition~8.6]{RS1}\label{lem1}
Let $(\Lambda, d)$ be a finitely aligned $k$-graph, let $q$ be a boolean representation of
$\Lambda$, and $F$ be a finite subset of $\Lambda$ such that $\lambda\in F$ implies $s(\lambda)\in
F$. For $\lambda\in \vee F$, define
\begin{equation}\label{Proj}
Q^{\vee F}_{\lambda}:=q_{\lambda}\prod_{\lambda\alpha\in \vee F\setminus\{\lambda\}}(q_{\lambda}-q_{\lambda\alpha}).
\end{equation}
Then, the $Q^{\vee F}_{\lambda}$ are mutually orthogonal projections and for each $\mu\in \vee F$,
\begin{equation}\label{eq1}
q_{\mu}=\sum_{\mu{\mu}'\in\vee F}Q^{\vee F}_{\mu{\mu}'}.
\end{equation}
\end{lemma}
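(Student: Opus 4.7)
The plan is to prove the two assertions separately --- mutual orthogonality first, then the sum identity --- using algebraic manipulation of the boolean representation relation (\ref{booleanRep}). Three preliminary facts will be used repeatedly: (a) the projections $q_\lambda$ pairwise commute, since $\MCE(\mu,\nu)=\MCE(\nu,\mu)$; (b) $q_{\lambda\alpha}\le q_\lambda$ for any $\lambda\alpha\in\Lambda$, because $\MCE(\lambda,\lambda\alpha)=\{\lambda\alpha\}$ forces $q_\lambda q_{\lambda\alpha}=q_{\lambda\alpha}$; and (c) $\vee F$ is closed under minimal common extensions --- if $\lambda\in\MCE(G_1)$ and $\mu\in\MCE(G_2)$ for $G_1,G_2\subset F$, then any $\gamma\in\MCE(\lambda,\mu)$ satisfies the conditions defining $\MCE(G_1\cup G_2)\subset\vee F$.

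For mutual orthogonality, fix distinct $\lambda,\mu\in\vee F$. Since $Q^{\vee F}_\lambda\le q_\lambda$ and $Q^{\vee F}_\mu\le q_\mu$ and the $q$'s commute, relation (\ref{booleanRep}) gives
\[
Q^{\vee F}_\lambda Q^{\vee F}_\mu
\;=\; Q^{\vee F}_\lambda Q^{\vee F}_\mu\, q_\lambda q_\mu
\;=\; \sum_{\gamma\in\MCE(\lambda,\mu)} Q^{\vee F}_\lambda Q^{\vee F}_\mu\, q_\gamma,
\]
so it is enough to kill each term $Q^{\vee F}_\lambda Q^{\vee F}_\mu q_\gamma$. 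Fix $\gamma\in\MCE(\lambda,\mu)$; by (c) we have $\gamma\in\vee F$, and by definition $\gamma$ extends $\lambda$. If $\gamma$ properly extends $\lambda$, then $(q_\lambda-q_\gamma)$ is one of the factors of $Q^{\vee F}_\lambda$, and $(q_\lambda-q_\gamma)q_\gamma=0$ by (b). Otherwise $\gamma=\lambda$; then $d(\mu)\le d(\lambda)$ and $\lambda(0,d(\mu))=\mu$, so $\lambda$ is a proper extension of $\mu$, $(q_\mu-q_\lambda)$ appears among the factors of $Q^{\vee F}_\mu$, and $(q_\mu-q_\lambda)q_\lambda=0$.

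For the sum identity, fix $\mu\in\vee F$ and set $V:=\vee F\cap\mu\Lambda$, which inherits closure under MCEs from (c). For each $\nu\in V$, let $\eta_1,\ldots,\eta_n$ denote the minimal proper extensions of $\nu$ inside $V$. A preparatory calculation using $q_{\nu\alpha}\le q_{\eta_i}$ whenever $\nu\alpha$ extends $\eta_i$ yields $(q_\nu-q_{\eta_i})(q_\nu-q_{\nu\alpha})=q_\nu-q_{\eta_i}$, which collapses the defining product to $Q^{\vee F}_\nu=\prod_{i=1}^n(q_\nu-q_{\eta_i})$. I would then induct on the height of $\nu$ in the extension poset on $V$. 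The base case --- $\nu$ maximal in $V$ --- is immediate since the product is empty. For the inductive step, the standard identity for commuting projections lying below $q_\nu$ gives
\[
q_\nu-Q^{\vee F}_\nu \;=\; \bigvee_{i=1}^n q_{\eta_i},
\]
and the main obstacle is to identify this join as $\sum_{\eta\in V\setminus\{\nu\}}Q^{\vee F}_\eta$. The $q_{\eta_i}$ are not mutually orthogonal, so a direct sum will overcount. However, the inductive hypothesis rewrites each $q_{\eta_i}=\sum_{\eta_i\eta'\in V}Q^{\vee F}_{\eta_i\eta'}$ as a sum of pairwise orthogonal projections (by the first part of the lemma), and the join of such sums equals the sum over the union of the index sets. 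By minimality of the $\eta_i$, every proper extension of $\nu$ in $V$ extends some $\eta_i$, so that union is exactly $V\setminus\{\nu\}$, completing the induction.
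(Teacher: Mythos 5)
Your proof is correct, but it is necessarily a different route from the paper's, because the paper does not actually prove the lemma: it simply observes that the computation establishing the displayed identity below (8.5) on p.~421 of \cite{RS1} uses only the relation $q_\mu q_\nu=\sum_{\gamma\in\MCE(\mu,\nu)}q_\gamma$, and so transfers verbatim once product systems of graphs are identified with $k$-graphs. You instead supply a complete, self-contained argument, and every step checks out: the closure of $\vee F$ under minimal common extensions is exactly the fact needed to make the orthogonality argument work (each $\gamma\in\MCE(\lambda,\mu)$ lands back in $\vee F$, so either $q_\lambda-q_\gamma$ is a factor of $Q^{\vee F}_\lambda$ or, when $\gamma=\lambda$, the factor $q_\mu-q_\lambda$ of $Q^{\vee F}_\mu$ does the killing); the absorption identity $(q_\nu-q_{\eta_i})(q_\nu-q_{\nu\alpha})=q_\nu-q_{\eta_i}$ legitimately collapses the defining product to the minimal proper extensions, since every proper extension of $\nu$ in $\vee F$ lies above a minimal one (finiteness of $\vee F$); and the downward induction on the extension poset, combined with the already-established mutual orthogonality to convert the join $\bigvee_i q_{\eta_i}$ into $\sum_{\eta>\nu}Q^{\vee F}_\eta$, is exactly the right way to avoid the overcounting you flag. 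What your version buys is transparency --- the reader sees that only \eqref{booleanRep} and the relation $q_{\lambda\alpha}\le q_\lambda$ are used, which is precisely the point the paper's citation is making implicitly; what the paper's version buys is brevity and an explicit pointer to where the combinatorics were first done. Two cosmetic points: your collapsed formula for $Q^{\vee F}_\nu$ should retain the leading $q_\nu$ so that the base case ($\nu$ maximal, empty product) reads $Q^{\vee F}_\nu=q_\nu$ rather than an empty product equal to the identity; and you should say explicitly that for $\nu\in V$ the proper extensions of $\nu$ in $\vee F$ all lie in $V$ (they extend $\mu$ through $\nu$), so that the product defining $Q^{\vee F}_\nu$ really is indexed by $V\setminus\{\nu\}$ as your induction assumes.
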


\begin{remark}
Observe that $\mu' = s(\mu)$ indexes a term in the sum~\eqref{eq1}, so the sum includes the term
$Q^{\vee F}_\mu$.
\end{remark}

\begin{proof}[Proof of Lemma~\ref{lem1}]
The argument which establishes displayed equation on \cite[p.421]{RS1} immediately below equation
(8.5) uses only the relation
$t_{\mu}t^*_{\mu}t_{\nu}t^*_{\nu}=\sum_{\lambda\in\text{MCE}(\mu,\nu)}t_{\lambda}t^*_{\lambda}$ for
representations of product systems of graphs. So, after identifying finitely aligned product
systems of graphs over $\mathbb{N}^k$ with finitely aligned $k$-graphs as in
\cite[Example~1.4]{FS1}, an identical arguments works here.
\end{proof}

\begin{remark}
If $F\subset\Lambda$ is finite and closed under minimal common extensions, then $\vee F=F$. Hence
 $\{Q^F_{\lambda}: \lambda\in F\}$ are mutually orthogonal projections and
$q_{\mu}=\sum_{\mu{\mu}'} Q^F_{\mu{\mu}'}$ for $\mu\in F$.
\end{remark}

\begin{lemma}\label{lem2}
Let $(\Lambda, d)$ be a finitely aligned $k$-graph and let $\{t_{\lambda}:\lambda\in \Lambda\}$ be
a Toplitz-Cuntz-Krieger $\Lambda$-family. Let $q_{\alpha}=t_{\alpha}t^{\ast}_{\alpha}$ for
$\alpha\in\Lambda$. Then $q:\alpha\mapsto q_{\alpha}$ is a boolean representation of $\Lambda$. If
$\{t_{\lambda}:\lambda\in\Lambda\}$ satisfies~(CK), then for a finite exhaustive set $F\subseteq
s(\alpha)\Lambda$,
\[
    \prod_{\alpha\alpha' \in F \setminus\{\alpha\}}(q_{\alpha}-q_{\alpha{\alpha}'})=0.
\]
\end{lemma}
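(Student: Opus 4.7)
The plan is to handle the two claims separately, each being a direct calculation from the Toeplitz-Cuntz-Krieger relations, together with an absorption trick for the second part.

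For the first claim, that $\alpha\mapsto q_\alpha=t_\alpha t_\alpha^\ast$ is a boolean representation, I would compute $q_\mu q_\nu = t_\mu (t_\mu^\ast t_\nu) t_\nu^\ast$ and substitute (TCK3) into the middle factor to get
\[
q_\mu q_\nu = \sum_{\mu\alpha=\nu\beta\in\MCE(\mu,\nu)} t_\mu t_\alpha t_\beta^\ast t_\nu^\ast.
\]
Using (TCK2) this becomes $\sum_{\gamma\in\MCE(\mu,\nu)} t_\gamma t_\gamma^\ast = \sum_{\gamma\in\MCE(\mu,\nu)} q_\gamma$, which is \eqref{booleanRep}. (When $\MCE(\mu,\nu)=\emptyset$ the sum is empty so $q_\mu q_\nu = 0$, as required.)

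For the second claim, assume (CK) and fix a finite exhaustive $F\subseteq s(\alpha)\Lambda$. The key observation is the identity
\[
q_\alpha - q_{\alpha\alpha'} = t_\alpha\bigl(t_{s(\alpha)} - t_{\alpha'} t_{\alpha'}^\ast\bigr) t_\alpha^\ast,
\]
which I would verify by writing $q_\alpha = t_\alpha t_{s(\alpha)} t_\alpha^\ast$ (since $t_\alpha t_{s(\alpha)}=t_\alpha$) and $q_{\alpha\alpha'} = t_\alpha t_{\alpha'} t_{\alpha'}^\ast t_\alpha^\ast$ via (TCK2). Each factor $t_{s(\alpha)} - t_{\alpha'} t_{\alpha'}^\ast$ is then a projection dominated by $t_{s(\alpha)}$, which is the source projection of $t_\alpha$, so consecutive factors absorb the middle $t_\alpha^\ast t_\alpha = t_{s(\alpha)}$ (a consequence of (TCK3) applied to $\mu=\nu=\alpha$, whose only $\MCE$ is $\alpha$ itself). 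Consequently the product telescopes through the conjugation by $t_\alpha$:
\[
\prod_{\alpha\alpha'\in F\setminus\{\alpha\}} (q_\alpha - q_{\alpha\alpha'}) \;=\; t_\alpha \Bigl(\prod_{\alpha'\in F,\,\alpha'\ne s(\alpha)} (t_{s(\alpha)} - t_{\alpha'} t_{\alpha'}^\ast)\Bigr) t_\alpha^\ast.
\]
Since $F\in s(\alpha)\text{FE}(\Lambda)$, (CK) at the vertex $s(\alpha)$ applied to $F$ says the bracketed product over all of $F$ vanishes; and if $s(\alpha)\in F$ the factor corresponding to $\alpha'=s(\alpha)$ is already zero, so the omitted product also vanishes. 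Conjugating by $t_\alpha$ gives the desired equality.

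I do not expect a real obstacle here: the only subtle point is the absorption step, where one must confirm that $t_{s(\alpha)}$ acts as the identity on every $(t_{s(\alpha)}-q_{\alpha'})$ from either side, which is immediate since $r(\alpha') = s(\alpha)$ gives $t_{s(\alpha)} t_{\alpha'} = t_{\alpha'}$ by (TCK2).
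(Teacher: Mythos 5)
Your argument is correct and follows essentially the same route as the paper: the boolean-representation identity is obtained by multiplying (TCK3) on the left by $t_\mu$ and on the right by $t_\nu^*$, and the second claim by conjugating the (CK) product at the vertex $s(\alpha)$ by $t_\alpha$, using $t_\alpha^*t_\alpha = t_{s(\alpha)}$ to absorb the intermediate projections. (Your parenthetical remark that when $s(\alpha)\in F$ the vanishing of the factor for $\alpha'=s(\alpha)$ forces the product \emph{omitting} that factor to vanish is a non sequitur, but this degenerate case is glossed over in the paper's own proof as well and does not affect the substance of the argument.)
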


\begin{proof}
Multiplying~(TCK3) on the left by $t_{\mu}$ and on the right by $t^*_{\nu}$ shows that
$q_{\lambda}$ is a boolean representation of $\Lambda$. Fix $\lambda\in \Lambda$ and let $F$ be a
finite exhaustive set such that $F\subseteq s(\lambda)\Lambda$. Then $\prod_{\lambda\mu\in
F\setminus\{\lambda\}}(q_{\lambda}-q_{\lambda\mu})=\prod_{\lambda\mu\in
F\setminus\{\lambda\}}(t_{\lambda}t^{\ast}_{\lambda}-t_{\lambda\mu}t^{\ast}_{\lambda\mu})
=t_{\lambda}(\prod_{\lambda\mu\in
F\setminus\{\lambda\}}(t_{s(\lambda)}-t_{\mu}t^{\ast}_{\mu}))t^{\ast}_{\lambda}=0$ by~(CK).
\end{proof}

\begin{lemma}\label{lem3}
Let $(\Lambda,d)$ be a finitely aligned $k$-graph and let $q:\lambda\mapsto q_{\lambda}$ a boolean
representation of $\Lambda$ with each $q_{\lambda}\ne 0$.
 Let $F$ be a finite subset of $\Lambda$ which is closed under minimal common extensions.
 If $\alpha\in F$ and $\{{\alpha}'\in\Lambda\setminus\Lambda^0:\alpha{\alpha}'\in F\}$ is not exhaustive,
then the projection $Q^F_{\alpha}$ of (\ref{Proj}) is nonzero.
\end{lemma}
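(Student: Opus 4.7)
The plan is to exploit the non-exhaustiveness hypothesis to construct a witness projection. Write $G := \{\alpha' \in \Lambda \setminus \Lambda^0 : \alpha\alpha' \in F\}$ for the set appearing in the hypothesis; by assumption there exists some $\mu \in s(\alpha)\Lambda$ with $\MCE(\mu, \alpha') = \emptyset$ for every $\alpha' \in G$. I plan to show that $Q^F_\alpha \cdot q_{\alpha\mu} = q_{\alpha\mu}$, which is nonzero by the standing hypothesis that every $q_\lambda \ne 0$, and this forces $Q^F_\alpha \ne 0$.

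Two applications of the boolean relation~(\ref{booleanRep}) drive the argument. First, since $\alpha\mu$ extends $\alpha$, the factorization property gives $\MCE(\alpha, \alpha\mu) = \{\alpha\mu\}$, whence $q_\alpha q_{\alpha\mu} = q_{\alpha\mu}$. Second, for each $\alpha' \in G$ I will verify the identity
\[
\MCE(\alpha\alpha', \alpha\mu) = \{\alpha\beta : \beta \in \MCE(\alpha', \mu)\}\,;
\]
any minimal common extension has degree $d(\alpha) + (d(\alpha') \vee d(\mu))$ and begins with $\alpha$, so by unique factorization its tail lies in $\MCE(\alpha', \mu)$, and the reverse inclusion is immediate. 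By the choice of $\mu$ the right-hand set is empty, so $q_{\alpha\alpha'} q_{\alpha\mu} = 0$, and hence $(q_\alpha - q_{\alpha\alpha'}) q_{\alpha\mu} = q_{\alpha\mu}$ for every factor occurring in the product defining $Q^F_\alpha$.

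To finish, I use the pairwise commutativity of the $q_\lambda$ (Remark~ii) and the fact that $q_{\alpha\mu}$ is idempotent to absorb $q_{\alpha\mu}$ through the product one factor at a time, yielding
\[
Q^F_\alpha\, q_{\alpha\mu}
= q_\alpha \prod_{\alpha\alpha' \in F \setminus \{\alpha\}} (q_\alpha - q_{\alpha\alpha'})\, q_{\alpha\mu}
= q_\alpha\, q_{\alpha\mu}
= q_{\alpha\mu} \ne 0,
\]
so $Q^F_\alpha \ne 0$. No serious obstacle is expected; the only step requiring care is the identification of $\MCE(\alpha\alpha', \alpha\mu)$ with $\alpha \cdot \MCE(\alpha', \mu)$, which is a direct application of the factorization property. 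It is worth noting that the witness $\mu$ is automatically a non-vertex path (since $\MCE(s(\alpha), \alpha') = \{\alpha'\} \ne \emptyset$ whenever $\alpha' \in G$), but this plays no role beyond reassuring us that $G$ is being tested against a genuine path.
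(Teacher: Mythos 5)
Your proposal is correct and follows essentially the same route as the paper: choose a witness $\tau$ (your $\mu$) of non-exhaustiveness, observe that $\MCE(\alpha',\tau)=\emptyset$ forces $\MCE(\alpha\alpha',\alpha\tau)=\emptyset$ and hence $q_{\alpha\alpha'}q_{\alpha\tau}=0$ by the boolean relation, and absorb $q_{\alpha\tau}$ through the product to get $Q^F_\alpha q_{\alpha\tau}=q_{\alpha\tau}\ne 0$. Your verification that $\MCE(\alpha\alpha',\alpha\mu)=\{\alpha\beta:\beta\in\MCE(\alpha',\mu)\}$ just makes explicit a step the paper asserts without proof.
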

\begin{proof}
Suppose that $\{{\alpha}'\in\Lambda\setminus\Lambda^0:\alpha{\alpha}'\in F\}$ is not exhaustive.
Then there exist $\tau\in s({\alpha})\Lambda$ such that $\MCE({\alpha}',\tau)=\emptyset$ whenever
$\alpha{\alpha}'\in F\setminus\{\alpha\}$. Since $\MCE({\alpha}',\tau)=\emptyset$ implies that
$\MCE(\alpha{\alpha}',\alpha\tau)=\emptyset$, it follows that
$q_{\alpha{\alpha}'}q_{\alpha\tau}=\sum_{\gamma\in\MCE(\alpha{\alpha}',\alpha\tau)}q_{\gamma}=0$
for all $\alpha\alpha'\in F\setminus\{\alpha\}$. So,
$Q^F_{\alpha}q_{\alpha\tau}=q_{\alpha}\prod_{\alpha{\alpha}'\in
F\setminus\{\alpha\}}(q_{\alpha}-q_{\alpha{\alpha}'})q_{\alpha\tau}=q_{\alpha\tau}\ne 0$.
\end{proof}
For the following Proposition, we need some notation. Let $\Lambda$ be a finitely aligned
$k$-graph. For each $\lambda\in\Lambda$, define
$P^{\ap}_{\lambda}:=S^{\ap}_{\lambda}(S^{\ap}_{\lambda})^*$ where
$\{S^{\ap}_{\lambda}:\lambda\in\Lambda\}$ is the Cuntz-Krieger $\Lambda$-family of Proposition
\ref{CCK}.
\begin{proposition}\label{hm}
Let $(\Lambda,d)$ be a finitely aligned $k$-graph and let $q$ be a boolean representation of
$\Lambda$ such that $q_{\lambda}\ne 0$ for each $\lambda\in\Lambda$. Then there is a homomorphism
$\psi_q:\overline{\lsp}\{q_{\lambda}:\lambda\in\Lambda\}\rightarrow\overline{\lsp}\{P^{{\ap}}_{\lambda}:\lambda\in\Lambda\}$
satisfying $\psi_q(q_{\lambda})=P^{{\ap}}_{\lambda}$ for all $\lambda\in\Lambda$. Moreover,
$\psi_q$ is injective if and only if $\prod_{\mu\in F}(q_{\lambda}-q_{\lambda\mu})=0$ for each
$\lambda\in\Lambda$ and finite exhaustive set $F\subseteq s(\lambda)\Lambda$.
\end{proposition}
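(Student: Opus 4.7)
The plan is to build $\psi_q$ by restricting to finite-dimensional abelian $C^*$-subalgebras indexed by the finite subsets $F\subseteq\Lambda$ that are closed under minimal common extensions. By Lemma~\ref{lem2} and Proposition~\ref{CCK}, $\lambda\mapsto P^{\ap}_\lambda$ is itself a boolean representation of $\Lambda$ with every projection nonzero, so Lemma~\ref{lem1} applies simultaneously to both $q$ and $P^{\ap}$, producing orthogonal families $Q^F_\lambda$ and $\widetilde Q^F_\lambda$ with $q_\mu=\sum_{\mu\mu'\in F}Q^F_{\mu\mu'}$ and $P^{\ap}_\mu=\sum_{\mu\mu'\in F}\widetilde Q^F_{\mu\mu'}$ for all $\mu\in F$. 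Thus $\lsp\{q_\lambda:\lambda\in F\}\subseteq\lsp\{Q^F_\lambda:\lambda\in F\}$, and the target assignment $q_\lambda\mapsto P^{\ap}_\lambda$ reduces to studying the explicit linear map $Q^F_\lambda\mapsto\widetilde Q^F_\lambda$ between finite-dimensional commutative $C^*$-algebras.

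The core step is to check that $Q^F_\lambda\mapsto\widetilde Q^F_\lambda$ restricts to a well-defined, contractive $*$-homomorphism on $\lsp\{q_\lambda:\lambda\in F\}$. Since the nonzero $Q^F_\lambda$ are mutually orthogonal, well-definedness reduces to the implication $Q^F_\lambda=0\Rightarrow\widetilde Q^F_\lambda=0$. I would deduce this as follows: by the contrapositive of Lemma~\ref{lem3}, $Q^F_\lambda=0$ forces $G:=\{\alpha'\in\Lambda\setminus\Lambda^0:\lambda\alpha'\in F\}$ to be exhaustive in $s(\lambda)\Lambda$; Lemma~\ref{lem2} applied to the Cuntz--Krieger family $\{S^{\ap}_\lambda\}$ then yields $\prod_{\alpha'\in G}(P^{\ap}_\lambda-P^{\ap}_{\lambda\alpha'})=0$, whence $\widetilde Q^F_\lambda=0$. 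Contractivity is automatic, since a linear combination of mutually orthogonal projections has norm equal to the largest absolute coefficient over the nonzero-index set, and on the image side we only index over a possibly smaller such set. Finally, the boolean relation~\eqref{booleanRep} makes $\bigcup_F\lsp\{q_\lambda:\lambda\in F\}$ a dense $*$-subalgebra of $\clsp\{q_\lambda:\lambda\in\Lambda\}$, so these compatible finite-dimensional maps extend by continuity to the desired $\psi_q$.

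For the injectivity characterisation both directions reduce to the same bookkeeping. If $\psi_q$ is injective, then, since $\{S^{\ap}_\lambda\}$ is Cuntz--Krieger, Lemma~\ref{lem2} gives $\psi_q\bigl(\prod_{\mu\in F'}(q_\lambda-q_{\lambda\mu})\bigr)=\prod_{\mu\in F'}(P^{\ap}_\lambda-P^{\ap}_{\lambda\mu})=0$, forcing the original product to vanish for every $\lambda$ and every finite exhaustive $F'\subseteq s(\lambda)\Lambda$. Conversely, under the stated condition, the argument of the previous paragraph runs symmetrically on both sides and yields $Q^F_\lambda=0\Leftrightarrow G\text{ exhaustive}\Leftrightarrow\widetilde Q^F_\lambda=0$; hence the two families of nonzero projections are indexed by the same set, and the restriction of $\psi_q$ to $\lsp\{Q^F_\lambda\}$ is a bijection onto $\lsp\{\widetilde Q^F_\lambda\}$, hence isometric, so $\psi_q$ is injective.

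The main obstacle is keeping the asymmetry between the two sides under control: well-definedness of $\psi_q$ requires the Cuntz--Krieger vanishing of the spatial representation $\{S^{\ap}_\lambda\}$ (through Lemma~\ref{lem2}), which is \emph{not} assumed of $q$, while the injectivity hypothesis in the proposition is precisely the converse statement for $q$ needed to make the bookkeeping symmetric.
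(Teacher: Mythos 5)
Your proposal is correct and follows essentially the same route as the paper: both arguments reduce to a finite set closed under minimal common extensions, use Lemma~\ref{lem1} to decompose $\lsp\{q_\lambda\}$ and $\lsp\{P^{\ap}_\lambda\}$ into spans of the mutually orthogonal projections $Q^F_\lambda$ and their $P^{\ap}$-analogues, and then combine Lemma~\ref{lem2} (applied to the Cuntz--Krieger family $S^{\ap}$) with Lemma~\ref{lem3} (applied to $q$, and to $P^{\ap}$ for the converse) to compare the sets of indices with nonzero projections; the injectivity characterisation is handled identically. The only difference is presentational: the paper states the key step as the norm inequality $\|\sum a_\lambda P^{\ap}_\lambda\|\le\|\sum a_\lambda q_\lambda\|$, whereas you phrase it as well-definedness of the finite-dimensional map $Q^F_\lambda\mapsto\widetilde Q^F_\lambda$, which is the same computation.
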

\begin{proof}
For the first assertion, fix a finite set $F\subseteq\Lambda$ and scalars $\{a_{\lambda}:\lambda\in F\}$.
Since $q_{\lambda}\mapsto P^{\ap}_{\lambda}$ preserves products and adjoints, it suffices to show that
\[\Big\Vert\sum_{\lambda\in F}a_{\lambda}P^{{\ap}}_{\lambda}\Big\Vert\;\le\;\Big\Vert\sum_{\lambda\in F}a_{\lambda}q_{\lambda}\Big\Vert.\]
For $\alpha\in \Lambda$, define
\[T^{\vee F}_{\alpha}:=P^{{\ap}}_{\alpha}\prod_{\alpha{\alpha}'\in\vee F\setminus\{\alpha\}}(P^{{\ap}}_{\alpha}-P^{{\ap}}_{\alpha{\alpha}'}),\] and let $Q^{\vee F}_{\alpha}$ be as in
(\ref{Proj}). For each $\lambda \in \vee F \setminus F$, define $a_\lambda := 0$. Then
Lemma~\ref{lem1} implies that
\[
\sum_{\lambda\in F}a_{\lambda}P^{\ap}_{\lambda}=\sum_{\lambda\in\vee F}a_{\lambda}P^{{\ap}}_{\lambda}
    =\sum_{\alpha\in\vee F}\Bigl(\sum_{\substack{\lambda\in\vee F \\ \alpha=\lambda{\lambda}'}} a_{\lambda}\Bigl)T^{\vee F}_{\alpha}
        \;\;\text{and}\;\;
\]
\[
\sum_{\lambda\in F}a_{\lambda}q_{\lambda}=    \sum_{\lambda\in\vee F}a_{\lambda}q_{\lambda}
        =\sum_{\alpha\in\vee F}\Bigl(\sum_{\substack{\lambda\in\vee F \\ \alpha=\lambda{\lambda}'}}a_{\lambda}\Bigl)Q^{\vee F}_{\alpha}.
\]
Also, Lemmas \ref{lem2}~and~\ref{lem3} imply that $\{\alpha\in\vee F:T^{\vee F}_{\alpha}\ne 0\}
\subseteq \{\alpha\in \vee F:Q^{\vee F}_{\alpha}\ne 0\}$. Thus,
\begin{equation}\label{eq2}
\Big\Vert\sum_{\lambda\in F}a_{\lambda}P^{{\ap}}_{\lambda}\Big\Vert=\max_{T^{\vee F}_{\alpha}\ne 0}\Big\vert\sum_{\lambda\in\vee F,\alpha=\lambda{\lambda}'}a_{\lambda}\Big\vert\;\le\;\max_{Q^{\vee F}_{\alpha}\ne 0}\Big\vert\sum_{\lambda\in\vee F,\alpha=\lambda{\lambda}'}a_{\lambda}\Big\vert=\Big\Vert\sum_{\lambda\in F}a_{\lambda}q_{\lambda}\Big\Vert.
\end{equation}

Now suppose that $\prod_{\mu\in F}(P^{{\ap}}_{\lambda}-P^{{\ap}}_{\lambda\mu})=0$ for each
$\lambda\in\Lambda$ and a finite exhaustive set $F\in s(\lambda)\Lambda$. Then for each finite
exhaustive set $F$, we have $\{\alpha\in F:T^F_{\alpha}\ne 0\}=\{\alpha\in F:Q^F_{\alpha}\ne 0\}$.
Thus we have equality at the second step in~(\ref{eq2}), which implies that $\psi_p$ is isometric.
To show the other direction, suppose that $\psi_p$ is injective. Fix $\lambda\in\Lambda$ and a
finite exhaustive set $F\subseteq s(\lambda)\Lambda$. Lemma \ref{lem2} implies that $\prod_{\mu\in
F}(P^{{\ap}}_{\lambda}-P^{{\ap}}_{\lambda\mu})=0$. Hence $\psi_q(\prod_{\mu\in
F}(q_{\lambda}-q_{\lambda\mu}))=\prod_{\mu\in F}(\psi_q(q_{\lambda})-\psi_q(q_{\lambda\mu}))=0$.
Since $\psi_q$ is injective, this forces  $\prod_{\mu\in F}(q_{\lambda}-q_{\lambda\mu})=0$.
\end{proof}

\begin{proposition}\label{CE}
Let $(\Lambda, d)$ be an aperiodic finitely aligned $k$-graph. Let
$\{t_{\lambda}:\lambda\in\Lambda\}$ be a Toeplitz-Cuntz-Krieger $\Lambda$-family. For $\lambda \in
\Lambda$, let $q_\lambda := t_\lambda t^*_\lambda$. Then for a finite subset $F$ of $\Lambda$ and
collection $\{a_{\mu,\nu}:\mu,\nu\in F\}$ of scalars,
\begin{equation}\label{Claim1}
\Big\Vert\sum_{\mu\in F}a_{\mu,\mu}q_{\mu}\Big\Vert\;\le\;\Big\Vert\sum_{\mu,\nu\in F}a_{\mu,\nu}t_{\mu}t^{\ast}_{\nu}\Big\Vert.
\end{equation}
Moreover, there exists a conditional expectation
$\Phi_t:C^{\ast}(\{t_{\lambda}:\lambda\in\Lambda\})\rightarrow\overline{\lsp}\{q_{\lambda}:\lambda\in\Lambda\}$
satisfying $\Phi_t(t_{\mu}t^{\ast}_{\nu})=\delta_{\mu,\nu}q_{\mu}$. In particular, if
$\pi_t:\mathcal{T}C^*(\Lambda)\rightarrow C^*(\{t_{\lambda}:\lambda\in\Lambda\})$ and
$\pi_{S^{\ap}}:\mathcal{T}C^*(\Lambda)\rightarrow C^*(\{S^{\ap}_{\lambda}:\lambda\in\Lambda\})$ are
the homomorphisms induced by the universal property of $\mathcal{T}C^*(\Lambda)$, then
\begin{equation}\label{exp}
{\psi}_q\circ\Phi_t\circ\pi_t=\Phi_{S^{{\ap}}}\circ\pi_{S^{{\ap}}}.
\end{equation}
\end{proposition}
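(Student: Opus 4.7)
My plan for Proposition~\ref{CE} is to establish the norm inequality~(\ref{Claim1}) first, then to build $\Phi_t$ from it by standard extension arguments, and finally to verify the intertwining identity~(\ref{exp}) by continuity. The norm inequality is the technical heart of the argument, and the only place where aperiodicity is used.

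To prove~(\ref{Claim1}), I would first enlarge $F$ to $F^+:=\vee(F\cup s(F))$, which is finite by finite alignment and closed under minimal common extensions. By Lemma~\ref{lem1} the projections $\{Q^{F^+}_\lambda:\lambda\in F^+\}$ are mutually orthogonal and $q_\mu=\sum_{\mu\mu'\in F^+}Q^{F^+}_{\mu\mu'}$ for every $\mu\in F^+$, so
\[
\sum_{\mu\in F}a_{\mu,\mu}\,q_\mu\;=\;\sum_{\lambda\in F^+} b_\lambda\,Q^{F^+}_\lambda,\qquad b_\lambda:=\sum_{\substack{\mu\in F\\ \lambda=\mu\mu'}} a_{\mu,\mu},
\]
and the norm of the left-hand side equals $\max\{|b_\lambda|:Q^{F^+}_\lambda\ne 0\}$. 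Fix a maximizer $\lambda^*$ and set $R_{\lambda^*}:=\prod_{\lambda^*\beta\in F^+\setminus\{\lambda^*\}}(q_{s(\lambda^*)}-q_\beta)$ and $V:=t_{\lambda^*}R_{\lambda^*}$. A direct computation shows $VV^*=Q^{F^+}_{\lambda^*}$, $V^*V=R_{\lambda^*}$, and the identity $R_{\lambda^*}t_{\eta_0}=0$ whenever $\lambda^*\eta_0\in F^+\setminus\{\lambda^*\}$ (because $R_{\lambda^*}$ contains the factor $q_{s(\lambda^*)}-q_{\eta_0}$, which annihilates $t_{\eta_0}$) then forces $Q^{F^+}_{\lambda^*}t_\mu=0$ unless $\mu\in F$ is an initial segment of $\lambda^*$. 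Therefore $V^*xV$ collapses to $\sum a_{\mu,\nu}R_{\lambda^*}t_{\mu'}^*t_{\nu'}R_{\lambda^*}$ summed over pairs $\mu,\nu\in F$ with $\lambda^*=\mu\mu'=\nu\nu'$; the diagonal piece sums to $b_{\lambda^*}R_{\lambda^*}$, and the pairs with $s(\mu)\ne s(\nu)$ vanish because $\MCE(\mu',\nu')=\emptyset$.

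The main obstacle is eliminating the remaining off-diagonal terms, those from pairs $\mu\ne\nu$ with $s(\mu)=s(\nu)$, and this is where aperiodicity is essential. My plan is to apply Lemma~\ref{APT} at the vertex $s(\lambda^*)$ to the finite set $\{\mu':\mu\in F,\ \lambda^*=\mu\mu'\}$, obtaining $\tau\in s(\lambda^*)\Lambda$ with $\MCE(\mu'\tau,\nu'\tau)=\emptyset$ for all distinct pairs, and then to redo the construction with $\lambda^*$ replaced by $\lambda^*\tau$ inside an MCE-closed enlargement $F^{++}\supseteq F^+\cup\{\lambda^*\tau\}$. The corresponding off-diagonal contributions now take the form $R_{\lambda^*\tau}\,t_{\mu'\tau}^*t_{\nu'\tau}\,R_{\lambda^*\tau}$, which vanish outright because $t_{\mu'\tau}^*t_{\nu'\tau}=0$ by~(TCK3) and the MCE-empty condition. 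Two bookkeeping subtleties arise: arranging $b_{\lambda^*\tau}=b_{\lambda^*}$ (by first choosing $\tau_1\in s(\lambda^*)\Lambda$ none of whose initial segments lies among the finitely many $\eta_0$ with $\lambda^*\eta_0\in F\setminus\{\lambda^*\}$, and then replacing $\tau$ by $\tau_1\tau_2$ with $\tau_2$ supplied by Lemma~\ref{APT}), and arranging $Q^{F^{++}}_{\lambda^*\tau}\ne 0$ (via Lemma~\ref{lem3}, after a harmless reduction to the case that all $q_\mu$ for $\mu\in F^+$ are nonzero). The resulting partial isometry $V':=t_{\lambda^*\tau}R_{\lambda^*\tau}$ then satisfies $V'^*xV'=b_{\lambda^*}V'^*V'$ with $V'^*V'$ a nonzero projection, yielding $|b_{\lambda^*}|\le\|x\|$.

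Once~(\ref{Claim1}) is in hand, define $\Phi_t(t_\mu t_\nu^*):=\delta_{\mu,\nu}q_\mu$ on the dense spanning subspace; by~(\ref{Claim1}), $\Phi_t$ extends to a contractive linear map of norm one from $C^*(\{t_\lambda:\lambda\in\Lambda\})$ onto $\overline{\lsp}\{q_\lambda:\lambda\in\Lambda\}$, idempotence on the spanning set persists under limits, and Tomiyama's theorem identifies $\Phi_t$ as a conditional expectation. For~(\ref{exp}), both $\psi_q\circ\Phi_t\circ\pi_t$ and $\Phi_{S^{\ap}}\circ\pi_{S^{\ap}}$ are bounded linear maps $\mathcal{T}C^*(\Lambda)\to\overline{\lsp}\{P^{\ap}_\lambda:\lambda\in\Lambda\}$, so by continuity equality needs only be verified on the generators $s_\mu s_\nu^*$, where both sides compute directly to $\delta_{\mu,\nu}P^{\ap}_\mu$ from the definitions of $\Phi_t$, $\psi_q$, and $\Phi_{S^{\ap}}$.
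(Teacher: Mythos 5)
Your overall architecture matches the paper in several respects: reducing everything to the norm inequality \eqref{Claim1}, expanding $\sum_\mu a_{\mu,\mu}q_\mu$ over the mutually orthogonal projections $Q^{F^+}_\lambda$ of Lemma~\ref{lem1}, compressing $x=\sum a_{\mu,\nu}t_\mu t^*_\nu$ so that only initial segments of a fixed $\lambda^*$ survive, invoking Lemma~\ref{APT} to annihilate the remaining off-diagonal terms, and finishing via Tomiyama's theorem and a check of \eqref{exp} on generators. The computations $VV^*=Q^{F^+}_{\lambda^*}$, $V^*V=R_{\lambda^*}$ and the collapse of $V^*xV$ onto suffixes are all correct. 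The genuine gap is in the step you dismiss as ``bookkeeping'': the $\tau_1$ you need does not exist in general. An element $\eta$ is an initial segment of some extension of $\tau_1$ exactly when $\MCE(\tau_1,\eta)\ne\emptyset$, so a $\tau_1$ guaranteeing that $\tau=\tau_1\tau_2$ avoids initial segments in $B:=\{\eta\ne s(\lambda^*):\lambda^*\eta\in F^+\}$ exists if and only if $B$ is \emph{not} exhaustive. When $B$ is exhaustive the passage from $\lambda^*$ to $\lambda^*\tau$ can create elements of $F$ that are initial segments of $\lambda^*\tau$ but not of $\lambda^*$; these change the diagonal coefficient from $b_{\lambda^*}$ to $b_{\lambda^*\tau}$ and, worse, produce cross terms $R\,t^*_{\mu''}t_{\nu''}R$ in which $\mu''$ is not of the form $\mu'\tau$ with $\mu'\in H$, so Lemma~\ref{APT} does not kill them. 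Concretely, take $k=1$ and the aperiodic graph with one vertex $v$ and two loops $e,f$, and let $F=\{v,f,ff,fe\}$ with coefficients making $\lambda^*=f$ the maximizer (e.g.\ $a_{v,v}=0$, $a_{f,f}=1$, $a_{ff,ff}=a_{fe,fe}=-1$, in the Toeplitz representation where $Q^{F^+}_f\ne0$). Here $H=\{v,f\}$ forces a nontrivial $\tau$, but every nontrivial $\tau\in v\Lambda$ begins with $e$ or $f$ and hence makes $fe$ or $ff$ a new initial segment of $f\tau$. The same exhaustiveness issue undercuts your appeal to Lemma~\ref{lem3} for $Q^{F^{++}}_{\lambda^*\tau}\ne0$, since that lemma applies only when the relevant extension set is not exhaustive.

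This is precisely why the paper's proof is a two-case argument rather than a uniform one. When $B^{\vee\overline{F}}_\lambda$ is not exhaustive it does essentially what you propose: it first picks $\alpha_1$ with $\MCE(\alpha_1,\eta)=\emptyset$ for all $\eta\in B^{\vee\overline{F}}_\lambda$ (this, not an ``initial segment'' condition, is what prevents new initial segments from appearing), then appends a further path to get \eqref{eq:long enough} and finally the separating $\tau$ from Lemma~\ref{APT}. When $B^{\vee\overline{F}}_\lambda$ \emph{is} exhaustive, it compresses by $Q^{\vee\overline{F}}_\lambda$ itself and kills the off-diagonal terms without any appeal to aperiodicity, using the enlargement $\overline{F}=F\cup F'$ in which $F'$ contains the paths $\lambda\beta',\lambda\delta'$ arising from iterated minimal common extensions; your $F^+=\vee(F\cup s(F))$ does not contain these, so compressing by $Q^{F^+}_{\lambda^*}$ alone would not close that case either. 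To repair your argument you must add this second case together with the enlargement $F'$; the remaining parts of your proof (well-definedness and contractivity of $\Phi_t$ from \eqref{Claim1}, Tomiyama's theorem, and verifying \eqref{exp} on spanning elements) agree with the paper and are fine.
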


\begin{proof}
To prove \eqref{Claim1}, fix a finite subset $F\subseteq\Lambda$ and scalars
$\{a_{\mu,\nu}:\mu,\nu\in F\}$. Assume without loss of generality that $F$ is closed under minimal
common extensions. Define
\[F'=\bigcup_{\mu,\nu\in F}\{\lambda{\beta}',\lambda{\delta}'\;:\;\lambda=\mu\beta=\nu\delta\in\MCE(\mu,\nu)\;\;\text{and}\;\;\beta{\beta}'=\delta{\delta}'\in\MCE(\beta,\delta)\}.\]
Then $F'$ is finite since $F$ is finite and each $\MCE(\beta,\delta)$ is finite. Let $\overline{F}=F\cup F'$.

For each $\lambda\in F$, let $B^{\vee \overline{F}}_{\lambda}=\{\lambda'\in
s(\lambda)\Lambda\setminus\{s(\lambda)\}:\lambda{\lambda}'\in \vee\overline{F}\}$. For each
$\lambda\in F$, if $B^{\vee\overline{F}}_{\lambda}$ is not exhaustive, then there exists
${\alpha}_1\in s(\lambda)\Lambda$ such that $\MCE(\mu,{\alpha}_1)=\emptyset$ for all $\mu\in
B^{\vee\overline{F}}_{\lambda}$. Fix $\alpha \in s(\alpha_1)\Lambda$ such that for any $i \le k$
satisfying $d(\alpha)_i < \max_{\mu \in \vee\overline{F}} d(\mu)_i$, we have
$s(\alpha)\Lambda^{e_i} = \emptyset$. Define $\alpha^\lambda := \alpha_1\alpha$. Then $\MCE(\mu,
\alpha^\lambda) = \emptyset$ for each $\mu \in F$ by choice of $\alpha_1$; and our choice of
$\alpha$ ensures that
\begin{equation}\label{eq:long enough}
\big(\mu \in \vee\overline{F}\text{ and }\MCE(\lambda\alpha^\lambda, \mu) \not= \emptyset\big)\; \implies\;
    \lambda\alpha^\lambda = \mu\mu'\text{ for some } \mu' \in \Lambda
\end{equation}
%
%
%

Let $G=\{\epsilon\in\Lambda:\lambda{\alpha}^{\lambda}=\mu\epsilon\;\text{for some}\;\lambda,\mu\in
\vee\overline{F}\}$. By Lemma~\ref{APT}, for each $v\in s(G) := \{s(\epsilon) : \epsilon \in G\}$,
there exists $\tau_v\in v\Lambda$  such that $\MCE(\epsilon\tau_v,\delta\tau_v)=\emptyset$ for all
distinct $\epsilon,\delta\in G v$. For $\lambda\in \vee\overline{F}$, define
$\tau^{\lambda}:=\tau_{s(\alpha^{\lambda})}$. For $\lambda\in F$, we define
\begin{equation}\label{phi1}
{\phi}^{\vee\overline{F}}_{\lambda}:=\begin{cases}q_{\lambda{\alpha}^{\lambda}{\tau}^{\lambda}}\;\;\text{if}\;\;B^{\vee\overline{F}}_{\lambda}\;\text{is not exhaustive}\\Q^{\vee\overline{F}}_{\lambda}\;\;\;\;\;\;\;\text{otherwise.}\end{cases}
\end{equation}
By definition, each ${\phi}^{\vee\overline{F}}_{\lambda}\le Q^{\vee\overline{F}}_{\lambda}$. So
since the $Q^{\vee\overline{F}}_{\lambda}$ are mutually orthogonal, the
${\phi}^{\vee\overline{F}}_{\lambda}$ are mutually orthogonal. Hence, for $\lambda\in F$
\begin{equation}\label{eqA}
\Big\Vert\sum_{\mu,\nu\in \vee\overline{F}}a_{\mu,\nu}t_{\mu}t^{\ast}_{\nu}\Big\Vert
\;\ge\;\Big\Vert\sum_{\lambda\in F}{\phi}^{\vee\overline{F}}_{\lambda}\Bigl(\sum_{\mu,\nu\in \vee\overline{F}}a_{\mu,\nu}t_{\mu}t^{\ast}_{\nu}\Bigl){\phi}^{\vee\overline{F}}_{\lambda}\Big\Vert.
\end{equation}

For $\lambda,\mu,\nu\in F$, we claim that
\begin{equation}\label{phi2}
{\phi}^{\vee\overline{F}}_{\lambda}t_{\mu}t^{\ast}_{\nu}{\phi}^{\vee\overline{F}}_{\lambda}=\begin{cases}{\phi}^{\vee\overline{F}}_{\lambda}\;\;\text{if $\mu=\nu$ and $\lambda=\mu{\lambda}'$ for some ${\lambda}'$}\\0\;\;\;\;\;\;\;\text{otherwise.}\end{cases}
\end{equation}
To prove \eqref{phi2}, we consider a number of cases separately.

\textbf{Case~1}: $\mu=\nu$. We first show that \eqref{phi2} holds if either $\lambda=\mu\lambda'$
or $\MCE(\lambda,\mu)=\emptyset$. If $\lambda=\mu{\lambda}'$, then
${\phi}^{\vee\overline{F}}_{\lambda}\le Q^{\vee\overline{F}}_{\lambda}\le q_{\lambda}\le q_{\mu}$,
and hence
${\phi}^{\vee\overline{F}}_{\lambda}t_{\mu}t^{\ast}_{\nu}{\phi}^{\vee\overline{F}}_{\lambda}
={\phi}^{\vee\overline{F}}_{\lambda}q_{\mu}{\phi}^{\vee\overline{F}}_{\lambda}={\phi}^{\vee\overline{F}}_{\lambda}$.
And if $\MCE(\mu,\lambda)=\emptyset$, then $q_{\mu}q_{\lambda}=0$; and hence the identities
$q_{\lambda{\alpha}^{\lambda}{\tau}^{\lambda}}q_{\mu}q_{\lambda{\alpha}^{\lambda}{\tau}^{\lambda}}=q_{\lambda{\alpha}^{\lambda}{\tau}^{\lambda}}q_{\lambda}q_{\mu}q_{\lambda{\alpha}^{\lambda}{\tau}^{\lambda}}=0$
and
$Q^{\vee\overline{F}}_{\lambda}q_{\mu}Q^{\vee\overline{F}}_{\lambda}=Q^{\vee\overline{F}}_{\lambda}q_{\lambda}q_{\mu}Q^{\vee\overline{F}}_{\lambda}=0$
give ${\phi}^{\vee\overline{F}}_{\lambda}q_{\mu}{\phi}^{\vee\overline{F}}_{\lambda}=0$. So to
complete Case~1, we suppose that $\lambda\ne\mu\lambda'$ and that $\MCE(\mu,\lambda)\ne\emptyset$;
we must show that $\phi^{\vee \overline{F}}_\lambda q_\mu \phi^{\vee \overline{F}}_\lambda = 0$. We
consider two subcases.

Case~1a: $\mu = \nu$ and $B^{\vee\overline{F}}_{\lambda}$ is not exhaustive. Then
${\phi}^{\vee\overline{F}}_{\lambda}=q_{\lambda{\alpha}^{\lambda}{\tau}^{\lambda}}$, so since
$q_{\lambda}\ge q_{\lambda\alpha^{\lambda}\tau^{\lambda}}$,
\begin{equation*}
{\phi}^{\vee\overline{F}}_{\lambda}q_{\mu}{\phi}^{\vee\overline{F}}_{\lambda}
=q_{\lambda{\alpha}^{\lambda}{\tau}^{\lambda}}q_{\mu}q_{\lambda}q_{\lambda{\alpha}^{\lambda}{\tau}^{\lambda}}
=\sum_{\gamma\in\MCE(\mu,\lambda)}q_{\lambda{\alpha}^{\lambda}{\tau}^{\lambda}}q_{\gamma}q_{\lambda{\alpha}^{\lambda}}q_{\lambda{\alpha}^{\lambda}{\tau}^{\lambda}}.
\end{equation*}
By choice of $\alpha^{\lambda}$, $\MCE({\lambda}',{\alpha}^{\lambda})=\emptyset$ for all
${\lambda}'\in B^{\vee\overline{F}}_{\lambda}$. Hence
$\MCE(\gamma,\lambda\alpha^{\lambda})=\MCE(\lambda{\lambda}',\lambda{\alpha}^{\lambda})=\emptyset$.
Thus, $q_{\gamma}q_{\lambda{\alpha}^{\lambda}}=0$, giving
${\phi}^{\vee\overline{F}}_{\lambda}q_{\mu}{\phi}^{\vee\overline{F}}_{\lambda}=0$. This completes
Case~1a.

Case~1b: $\mu = \nu$ and $B^{\vee\overline{F}}_{\lambda}$ is exhaustive. Then
${\phi}^{\vee\overline{F}}_{\lambda}=Q^{\vee\overline{F}}_{\lambda}$. Whenever
$\beta,\beta\beta'\in\vee \overline{F}$, we have $Q^{\vee\overline{F}}_{\beta}\le
q_{\beta}-q_{\beta{\beta}'}$ and $(q_{\beta}-q_{\beta{\beta}'})\perp q_{\beta{\beta}'}$. Since
$\lambda,\mu\in F$ and $F$ is closed under minimal common extensions, $\MCE(\lambda,\mu)\subset
F\subset \vee\overline{F}$. So,
\begin{equation*}\begin{split}
Q^{\vee\overline{F}}_{\lambda}q_{\mu}Q^{\vee\overline{F}}_{\lambda}
&=Q^{\vee\overline{F}}_{\lambda}q_{\mu}q_{\lambda}\prod_{\lambda\alpha\in \vee\overline{F}\setminus\{\lambda\}}(q_{\lambda}-q_{\lambda\alpha})\\
&=Q^{\vee\overline{F}}_{\lambda}\sum_{\lambda{\lambda}'\in\MCE(\lambda,\mu)}q_{\lambda{\lambda}'}\prod_{\lambda\alpha\in \vee\overline{F}\setminus\{\lambda\}}(q_{\lambda}-q_{\lambda\alpha})=0.
\end{split}
\end{equation*}
This complete Case~1b.

\textbf{Case~2}: $\mu \not= \nu$. Again we consider two subcases.

Case~2a: $\mu\ne\nu$ and $B^{\vee\overline{F}}_{\lambda}$ is exhaustive. Then
${\phi}^{\vee\overline{F}}_{\lambda}=Q^{\vee\overline{F}}_{\lambda}$. We calculate:
\[
t_{\lambda}t^{\ast}_{\lambda}t_{\mu}t^{\ast}_{\nu}t_{\lambda}t^{\ast}_{\lambda}
    =q_{\lambda}q_{\mu}t_{\mu}t^{\ast}_{\nu}q_{\nu}q_{\lambda}
    =\sum_{\substack{\lambda\alpha=\mu\beta\in\MCE(\lambda,\mu) \\ \lambda\tau=\nu\delta\in\MCE(\lambda,\nu)}}q_{\mu\beta}t_{\mu}t^{\ast}_{\nu}q_{\nu\delta}.
\]
Applying (TCK3) and the definition of the $q_\lambda$ to each term in the sum gives
\[
t_{\lambda}t^*_{\lambda}t_{\mu}t^*_{\nu}t_{\lambda}t^*_{\lambda}
    =\sum_{\substack{\lambda\alpha=\mu\beta\MCE(\lambda,\mu)\\ \lambda\tau=\nu\delta\in\MCE(\lambda,\nu)\\ \beta{\beta}'=\delta{\delta}'\in\MCE(\beta,\delta)}}
        t_{\lambda\alpha{\beta}'}t^{\ast}_{\lambda\tau{\delta}'}.
\]

Thus,
\begin{equation*}\begin{split}
Q^{\vee\overline{F}}_{\lambda}t_{\mu}t^{\ast}_{\nu}Q^{\vee\overline{F}}_{\lambda}
&=q_{\lambda}\prod_{\lambda{\lambda}'\in \vee\overline{F}\setminus\{\lambda\}}(q_{\lambda}-q_{\lambda{\lambda}'})t_{\mu}t^{\ast}_{\nu}q_{\lambda}\prod_{\lambda{\lambda}'\in \vee\overline{F}}(q_{\lambda}-q_{\lambda{\lambda}'})\\
&=\prod_{\lambda{\lambda}'\in \vee\overline{F}\setminus\{\lambda\}}(q_{\lambda}-q_{\lambda{\lambda}'})\Bigl(\sum_{\substack{\lambda\alpha=\mu\beta\in\MCE(\lambda,\mu)\\ \lambda\tau=\nu\delta\in\MCE(\lambda,\nu)\\ \beta{\beta}'=\delta{\delta}'\MCE(\beta,\delta)}}
t_{\lambda\alpha{\beta}'}t^{\ast}_{\lambda\tau{\delta}'}\Bigl)\prod_{\lambda{\lambda}'\in \vee\overline{F}}(q_{\lambda}-q_{\lambda{\lambda}'})\\
&=\prod_{\lambda{\lambda}'\in \vee\overline{F}\setminus\{\lambda\}}(q_{\lambda}-q_{\lambda{\lambda}'})\Bigl(\sum_{\substack{\lambda\alpha=\mu\beta\in\MCE(\lambda,\mu)\\ \lambda\tau=\nu\delta\in\MCE(\lambda,\nu)\\ \beta{\beta}'=\delta{\delta}'\in\MCE(\beta,\delta)}}
q_{\lambda\alpha{\beta}'}t_{\lambda\alpha{\beta}'}t^{\ast}_{\lambda\tau{\delta}'}q_{\lambda\tau{\delta}'}\Bigl)\prod_{\lambda{\lambda}'\in \vee\overline{F}}(q_{\lambda}-q_{\lambda{\lambda}'}).
\end{split}
\end{equation*}
Fix $\lambda\alpha=\mu\beta\in\MCE(\lambda,\mu)$, $\lambda\tau=\nu\delta\in\MCE(\lambda,\nu)$ and $\beta{\beta}'=\delta{\delta}'\in\MCE(\beta,\delta)$.
If either $d(\alpha)\ne 0$ or $d(\tau)\ne 0$, then
\[
    \prod_{\lambda{\lambda}'\in \vee\overline{F}\setminus\{\lambda\}}(q_{\lambda}-q_{\lambda{\lambda}'})
        q_{\lambda\alpha{\beta}'}t_{\lambda\alpha{\beta}'}t^{\ast}_{\lambda\tau{\delta}'}q_{\lambda\tau{\delta}'}
        \prod_{\lambda{\lambda}'\in \vee\overline{F}\setminus\{\lambda\}}(q_{\lambda}-q_{\lambda{\lambda}'})=0
\]
because
$\prod_{\lambda{\lambda}'\in\vee\overline{F}}(q_{\lambda}-q_{\lambda{\lambda}'})q_{\lambda\alpha{\beta}'}\le(q_{\lambda}-q_{\lambda\alpha})q_{\lambda\alpha{\beta}'}=0$
and similarly on the right. So suppose that $d(\alpha)=d(\tau)=0$, then $\mu=\lambda(0,d(\mu))$ and
$\nu=\lambda(0,d(\nu))$. So $\mu\ne \nu$ implies $d(\beta')-d(\delta')=d(\nu)-d(\mu)\ne 0$, whence
one of $d(\beta')$ and $d(\delta')$ is nonzero. Since
$\lambda\beta',\lambda\delta'\in\vee\overline{F}$ by construction, this forces
\[\begin{split}
\prod_{\lambda{\lambda}'\in \vee\overline{F}\setminus\{\lambda\}}(q_{\lambda}-q_{\lambda{\lambda}'})&
    q_{\lambda\alpha{\beta}'}t_{\lambda\alpha{\beta}'}t^{\ast}_{\lambda\tau{\delta}'}q_{\lambda\tau{\delta}'}
    \prod_{\lambda{\lambda}'\in \vee\overline{F}\setminus\{\lambda\}}(q_{\lambda}-q_{\lambda{\lambda}'})\\
    &{}=\prod_{\lambda{\lambda}'\in\vee\overline{F}\setminus\{\lambda\}}(q_{\lambda}-q_{\lambda{\lambda}'})q_{\lambda\beta'} t_{\lambda\beta'} t^*_{\lambda\delta'}q_{\lambda\delta'}
        \prod_{\lambda\lambda'\in\vee\overline{F}\setminus\{\lambda\}}(q_{\lambda}-q_{\lambda\lambda'})=0
\end{split}\]
as above. This completes Case 2a.

Case~2b: $\mu\ne \nu$ and $B^{\vee\overline{F}}_{\lambda}$ is not exhaustive. Then,
\begin{equation*}\begin{split}
{\phi}^{\vee\overline{F}}_{\lambda}t_{\mu}t^{\ast}_{\nu}{\phi}^{\vee\overline{F}}_{\lambda}
&=q_{\lambda{\alpha}^{\lambda}{\tau}^{\lambda}}q_{\mu}t_{\mu}t^{\ast}_{\nu}q_{\nu}q_{\lambda{\alpha}^{\lambda}{\tau}^{\lambda}}\\
&=\sum_{\substack{\lambda{\alpha}^{\lambda}{\tau}^{\lambda}\eta=\mu\gamma\in\MCE(\lambda{\alpha}^{\lambda}{\tau}^{\lambda},\mu)\\ \lambda{\alpha}^{\lambda}{\tau}^{\lambda}\zeta=\nu\delta\in\MCE(\lambda{\alpha}^{\lambda}{\tau}^{\lambda},\nu)}}
q_{\lambda{\alpha}^{\lambda}{\tau}^{\lambda}\eta}t_{\mu}t^{\ast}_{\nu}q_{\lambda{\alpha}^{\lambda}{\tau}^{\lambda}\zeta}\\
&=\sum_{\substack{\lambda{\alpha}^{\lambda}{\tau}^{\lambda}\eta=\mu\gamma\in\MCE(\lambda{\alpha}^{\lambda}{\tau}^{\lambda},\mu)\\
\lambda{\alpha}^{\lambda}{\tau}^{\lambda}\zeta=\nu\delta\in\MCE(\lambda{\alpha}^{\lambda}{\tau}^{\lambda},\nu)}}
t_{\lambda{\alpha}^{\lambda}{\tau}^{\lambda}\eta}t^{\ast}_{\lambda{\alpha}^{\lambda}{\tau}^{\lambda}\eta}t_{\mu}t^{\ast}_{\nu}
t_{\lambda{\alpha}^{\lambda}{\tau}^{\lambda}\zeta}t^{\ast}_{\lambda{\alpha}^{\lambda}{\tau}^{\lambda}\zeta}.
\end{split}
\end{equation*}

By~\eqref{eq:long enough}, if $\lambda\alpha^{\lambda}\ne\mu\epsilon'$ or
$\lambda\alpha^{\lambda}\ne\nu\delta'$, then
$\phi^{\vee\overline{F}}_{\lambda}t_{\mu}t^*_{\nu}\phi^{\vee\overline{F}}_{\lambda}=0$. So suppose
that $\lambda{\alpha}^{\lambda}=\mu{\epsilon}'$ and $\lambda{\alpha}^{\lambda}=\nu{\delta}'$. Then,
\[t^{\ast}_{\lambda{\alpha}^{\lambda}{\tau}^{\lambda}\eta}t_{\mu}t^{\ast}_{\nu}t_{\lambda{\alpha}^{\lambda}{\tau}^{\lambda}\zeta}
=t^{\ast}_{{\epsilon}'{\tau}^{\lambda}\eta}t^{\ast}_{\mu}t_{\mu}t^{\ast}_{\nu}t_{\nu}t_{{\delta}'{\tau}^{\lambda}\zeta}
=t^{\ast}_{{\epsilon}'{\tau}^{\lambda}\eta}t_{{\delta}'{\tau}^{\lambda}\zeta}.\]
By choice of ${\tau}^{\lambda}$, for distinct ${\epsilon}',{\delta}'$ such that
$\lambda{\alpha}^{\lambda}=\mu{\epsilon}'=\nu{\delta}'$,
 we have $\MCE({\epsilon}'{\tau}^{\lambda},{\delta}'{\tau}^{\lambda})=\emptyset$, which implies that
 $\MCE({\epsilon}'{\tau}^{\lambda}\eta,{\delta}'{\tau}^{\lambda}\zeta)=\emptyset$ for any $\eta,\zeta \in \Lambda$.
Thus,
$t^{\ast}_{\lambda{\alpha}^{\lambda}{\tau}^{\lambda}\eta}t_{\mu}t^{\ast}_{\nu}t_{\lambda{\alpha}^{\lambda}{\tau}^{\lambda}\zeta}
=t^{\ast}_{{\epsilon}'{\tau}^{\lambda}\eta}t_{{\delta}'{\tau}^{\lambda}\zeta}=0$, forcing
${\phi}^{\vee\overline{F}}_{\lambda}t_{\mu}t^{\ast}_{\nu}{\phi}^{\vee\overline{F}}_{\lambda}=0$.
This completes Case~2b, establishing~\eqref{phi2}.

Now, to establish \eqref{Claim1}, we first show that
\begin{equation}\label{eq3}
\{\lambda\in\vee\overline{F}\;:\;{\phi}^{\vee\overline{F}}_{\lambda}\ne 0\}=\{\lambda\in\vee\overline{F}\;:\;Q^{\vee\overline{F}}_{\lambda}\ne 0\}.
\end{equation}

If $B^{\vee\overline{F}}_{\lambda}$ is exhaustive, then
${\phi}^{\vee\overline{F}}_{\lambda}=Q^{\vee\overline{F}}_{\lambda}$, which implies that
${\phi}^{\vee\overline{F}}_{\lambda}\ne 0\Leftrightarrow Q^{\vee\overline{F}}_{\lambda}\ne 0$; and
if $B^{\vee\overline{F}}_{\lambda}$ is not exhaustive, then
${\phi}^{\vee\overline{F}}_{\lambda}=q_{\lambda{\alpha}^{\lambda}{\tau}^{\lambda}}=q_{\lambda{\alpha}^{\lambda}{\tau}^{\lambda}}Q^{\vee\overline{F}}_{\lambda}$,
and since $q_{\lambda{\alpha}^{\lambda}{\tau}^{\lambda}}\ne 0$ by hypothesis, it follows that both
$\phi^{\vee\overline{F}}_{\lambda}$ and $Q^{\vee\overline{F}}_{\lambda}$ are nonzero. This gives
\eqref{eq3}.

Define $a_{\mu,\nu} := 0$ for $\mu,\nu\in\vee\overline{F}\setminus F$. Then
\begin{equation*}\begin{split}
\Big\Vert\sum_{\mu,\nu\in F}a_{\mu,\nu}t_{\mu}t^{\ast}_{\nu}\Big\Vert
&=\Big\Vert\sum_{\mu,\nu\in \vee\overline{F}}a_{\mu,\nu}t_{\mu}t^{\ast}_{\nu}\Big\Vert\\
&\ge\;\Big\Vert\sum_{\lambda\in F}{\phi}^{\vee\overline{F}}_{\lambda}\Bigl(\sum_{\mu,\nu\in \vee\overline{F}}a_{\mu,\nu}t_{\mu}t^{\ast}_{\nu}\Bigl){\phi}^{\vee\overline{F}}_{\lambda}\Big\Vert\;\;\text{by (\ref{eqA})}\\
&=\;\Big\Vert\sum_{\lambda\in F}{\phi}^{\vee\overline{F}}_{\lambda}\Bigl(\sum_{\mu,\nu\in F}a_{\mu,\nu}t_{\mu}t^{\ast}_{\nu}\Bigl){\phi}^{\vee\overline{F}}_{\lambda}\Big\Vert\\
&=\;\Big\Vert\sum_{\lambda\in F}\Bigl(\sum_{\mu\in F}a_{\mu,\mu}t_{\mu}t^{\ast}_{\mu}\Bigl){\phi}^{\vee\overline{F}}_{\lambda}\Big\Vert\;\;\text{by (\ref{phi2})}\\
&=\max_{{\phi}^{\vee\overline{F}}_{\lambda}\ne 0}\Big\vert\sum_{\substack{\mu\in F \\ \lambda=\mu{\mu}'\in F}}a_{\mu,\mu}\Big\vert.
\end{split}\]

Moreover,
\[\begin{split}
\Big\Vert\sum_{\mu\in F}a_{\mu,\mu}q_{\mu}\Big\Vert
&=\Big\Vert\sum_{\mu\in\vee\overline{F}}a_{\mu,\mu}q_{\mu}\Big\Vert\\
&=\;\Big\Vert\sum_{\lambda\in\vee\overline{F}}\Bigl(\sum_{\mu\in\vee\overline{F}}a_{\mu,\mu}t_{\mu}t^{\ast}_{\mu}\Bigl)Q^{\vee\overline{F}}_{\lambda}\Big\Vert\;\;\text{by (\ref{eq1})}\\
&=\max_{Q^{\vee\overline{F}}_{\lambda}\ne 0}\Big\vert\sum_{\substack{\mu\in\vee\overline{F} \\ \lambda=\mu{\mu}'\in\vee\overline{F}}}a_{\mu,\mu}\Big\vert\\
&=\max_{Q^{\vee\overline{F}}_{\lambda}\ne 0}\Big\vert\sum_{\substack{\mu\in F \\ {\lambda=\mu{\mu}'\in F}}}a_{\mu,\mu}\Big\vert\;\;\text{since $a_{\mu,\mu}=0$ for $\mu\in\vee\overline{F}\setminus F$}.\\
\end{split}\]
Hence (\ref{eq3}) establishes \eqref{Claim1}.

It follows that the map $t_{\mu}t^{\ast}_{\nu}\mapsto\delta_{\mu,\nu}t_{\mu}t^{\ast}_{\nu}$ extends
to a well-defined linear map $\Phi_t$ from $C^{\ast}(\{t_{\lambda}:\lambda\in\Lambda\})$ to
$\clsp\{q_{\lambda}:\lambda\in\Lambda\}$. Since $\Phi_t$ is a linear idempotent of norm one, it is
a conditional expectation by \cite[Theorem II.6.10.2]{B}. The final statement is straightforward to
check since the two maps in question agree on spanning elements.
\end{proof}

\begin{lemma}\label{pos}
Let $(\Lambda,d)$ be an aperiodic finitely aligned $k$-graph, and let $C^*_{\min}(\Lambda)$ be as in
Definition~\ref{dfn:csmin}. Then the expectation
${\Phi}_{S^{{\ap}}}:C^{\ast}_{{\min}}(\Lambda)\rightarrow\clsp\{P^{{\ap}}_{\lambda}:\lambda\in\Lambda\}$
obtained from Proposition \ref{CE} is faithful on positive elements.
\end{lemma}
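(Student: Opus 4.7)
The plan is to realize $\Phi_{S^{\ap}}$ as the restriction to $C^*_{\min}(\Lambda)$ of the canonical conditional expectation from $\Bb(\Hh^{\ap})$ onto its diagonal MASA, and then invoke the standard faithfulness of that diagonal expectation on positive operators. Specifically, let $\Delta:\Bb(\Hh^{\ap})\to\Bb(\Hh^{\ap})$ be the contractive linear map determined by $\Delta(T)\xi_x=\langle T\xi_x,\xi_x\rangle\xi_x$; this is the usual conditional expectation onto the diagonal subalgebra $\Dd\cong\ell^\infty(\partial\Lambda^{\ap})$ associated to the orthonormal basis $\{\xi_x:x\in\partial\Lambda^{\ap}\}$.

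The first step is to compute, directly from~(\ref{rep}), the diagonal entries of each spanning element $S^{\ap}_\mu(S^{\ap}_\nu)^{\ast}$. A short calculation gives that $(S^{\ap}_\nu)^{\ast}\xi_x=\xi_{\sigma^{d(\nu)}(x)}$ precisely when $x=\nu\,\sigma^{d(\nu)}(x)$ (and $0$ otherwise), and then $S^{\ap}_\mu(S^{\ap}_\nu)^{\ast}\xi_x=\xi_{\mu\sigma^{d(\nu)}(x)}$ when $s(\mu)=s(\nu)$ as well. Thus $\langle S^{\ap}_\mu(S^{\ap}_\nu)^{\ast}\xi_x,\xi_x\rangle\neq 0$ forces both $x=\nu\,\sigma^{d(\nu)}(x)$ and $x=\mu\,\sigma^{d(\nu)}(x)$. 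Comparing degrees yields $d(\mu)=d(\nu)$, and then the factorization property forces $\mu=\nu$. In that case the entry equals $1$ exactly when $x\in\mu\partial\Lambda^{\ap}$. This proves $\Delta(S^{\ap}_\mu(S^{\ap}_\nu)^{\ast})=\delta_{\mu,\nu}P^{\ap}_\mu$.

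The second step is to identify $\Phi_{S^{\ap}}$ with $\Delta|_{C^{\ast}_{\min}(\Lambda)}$. Since both are bounded linear maps on $C^{\ast}_{\min}(\Lambda)$ (the former as a conditional expectation, the latter as a restriction of a contraction) and they agree on the dense spanning family $\{S^{\ap}_\mu(S^{\ap}_\nu)^{\ast}:\mu,\nu\in\Lambda\}$ by the preceding computation and by Proposition~\ref{CE}, they coincide on all of $C^{\ast}_{\min}(\Lambda)$.

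The final step is the classical observation that $\Delta$ is faithful on $\Bb(\Hh^{\ap})^+$: if $T\geq 0$ and $\Delta(T)=0$, then for every $x\in\partial\Lambda^{\ap}$ we have $\|T^{1/2}\xi_x\|^2=\langle T\xi_x,\xi_x\rangle=0$, so $T^{1/2}$ vanishes on an orthonormal basis and thus $T=0$. Restricting to $C^{\ast}_{\min}(\Lambda)^+\subseteq\Bb(\Hh^{\ap})^+$ preserves this faithfulness, which is exactly the claim. There is no genuine obstacle here; the only technical point is the matrix-entry bookkeeping in step one, which is a straightforward degree-comparison and factorization argument.
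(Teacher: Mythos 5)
Your overall strategy --- realizing $\Phi_{S^{\ap}}$ as the restriction to $C^*_{\min}(\Lambda)$ of the canonical diagonal expectation $\Delta$ on $\Bb(\Hh^{\ap})$, checking agreement on the spanning elements $S^{\ap}_\mu(S^{\ap}_\nu)^*$, and invoking faithfulness of $\Delta$ on positive operators --- is exactly the paper's proof. However, there is a genuine gap at the one step where the standing hypotheses actually do any work. You claim that $x=\mu\,\sigma^{d(\nu)}(x)$ and $x=\nu\,\sigma^{d(\nu)}(x)$ force $d(\mu)=d(\nu)$ ``by comparing degrees.'' Writing $y=\sigma^{d(\nu)}(x)$, the degree identity reads $d(\mu)+d(y)=d(x)=d(\nu)+d(y)$ in $(\mathbb{N}\cup\{\infty\})^k$, and in any coordinate $i$ with $d(x)_i=\infty$ (the typical situation for a boundary path) this says only $\infty=\infty$, so $d(y)_i$ cannot be cancelled. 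The inference is genuinely invalid without a further input: for the $1$-graph with one vertex and one loop $e$, the boundary path $x=e^\infty$ satisfies $x=e\,\sigma^{d(e^2)}(x)=e^2\,\sigma^{d(e^2)}(x)$ while $d(e)\ne d(e^2)$.

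The missing ingredient is precisely the aperiodicity of the basis paths, which is the entire reason the representation is built on $\ell^2(\partial\Lambda^{\ap})$ rather than $\ell^2(\partial\Lambda)$. The correct argument is: $x=\mu y=\nu y$ gives $\sigma^{d(\mu)}(x)=y=\sigma^{d(\nu)}(x)$, and since $x\in\partial\Lambda^{\ap}$, the defining property of aperiodic boundary paths forces $d(\mu)=d(\nu)$; only then does the factorization property yield $\mu=\nu$. As written, your argument never uses that $x$ is aperiodic, so it would equally ``prove'' that the diagonal expectation on $\ell^2(\partial\Lambda)$ annihilates every off-diagonal $S_\mu S^*_\nu$, which is false for periodic graphs (the loop example above gives $\langle S_{e}S_{e^2}^*\xi_x,\xi_x\rangle=1$). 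Once this step is repaired, the rest of your proposal --- the matrix-entry computation, the identification of $\Phi_{S^{\ap}}$ with $\Delta$ on the dense spanning family, and the standard faithfulness of $\Delta$ via $\|T^{1/2}\xi_x\|^2=\langle T\xi_x,\xi_x\rangle$ --- is correct and coincides with the paper's argument.
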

\begin{proof}
Let $\{\xi_x:x\in\partial\Lambda^{\ap}\}$ be the canonical orthonormal basis for $\Hh^{\ap} =
{\ell}^2({\partial\Lambda}^{{\ap}})$, and for each $x\in\partial\Lambda^{\ap}$, let
$\theta_{\xi_x,\xi_x}$ be the rank-one projection onto $\mathbb{C}\xi_x$. Since the canonical
expectation $\Phi(T)(h)=\sum_{x}\theta_{\xi_x,\xi_x}T\theta_{\xi_x,\xi_x}(h)$ on
$B(\mathcal{\Hh^{\ap}})$ is faithful, it suffices to show that for $a\in
C^{\ast}_{{\min}}(\Lambda)$, we have
$\Phi_{S^{{\ap}}}(a)=\sum_{x\in{\partial\Lambda}^{{\ap}}}\langle a\xi_x\vert\xi_x\rangle
\theta_{\xi_x,\xi_x}$.

Recall the definition of $\{S^{{\ap}}_{\lambda}:\lambda\in\Lambda\}$ from Proposition \ref{CCK}.
It is not hard to calculate
\[(S^{{\ap}}_{\lambda})^{\ast}\xi_x=\begin{cases}\xi_y\;\;\;\text{if}\;\;x={\lambda}y\;\;\text{for some $y$},\\0\;\;\;\;\text{otherwise.}\end{cases}\]
Fix $\mu,\nu\in\Lambda$ and $x\in{\partial\Lambda}^{{\ap}}$. Then,
\[\langle S^{{\ap}}_{\mu}(S^{{\ap}}_{\nu})^{\ast}\xi_x\vert\xi_x\rangle
=\langle(S^{{\ap}}_{\nu})^{\ast}\xi_x\vert(S^{{\ap}}_{\mu})^{\ast}\xi_x\rangle =\begin{cases}
1\;\;\;\text{if}\;\;x={\mu}y={\nu}y\;\;\text{for some $y$},\\ 0\;\;\;\;\text{otherwise.}\end{cases}\]

Suppose that $x={\mu}y={\nu}y$. Let $m=d(\mu)$ and $n=d(\nu)$. Then Lemma \ref{APP} implies that
${\sigma}^m(x)={\sigma}^m({\mu}y)=y\in{\partial\Lambda}^{{\ap}}$ and
${\sigma}^n(x)={\sigma}^n({\nu}y)=y$. Since $x\in\partial\Lambda^{\ap}$, this forces $m=n$, i.e.
$d(\mu)=d(\nu)$. Since $x={\mu}y={\nu}y$, the factorization property implies $\mu=\nu$. Hence,
\[\sum_{x\in{\partial\Lambda}^{{\ap}}}\langle S^{{\ap}}_{\mu}(S^{{\ap}}_{\nu})^{\ast}\xi_x\vert\xi_x\rangle\theta_{\xi_x,\xi_x}
=\delta_{\mu,\nu}\text{proj}_{\clsp\{\xi_x:x\in{\partial\Lambda}^{{\ap}}\}}
=\delta_{\mu,\nu}P^{{\ap}}_{\mu}=\Phi_{S^{{\ap}}}(S^{{\ap}}_{\mu}(S^{{\ap}}_{\nu})^{\ast}).\] Since
$\clsp\{S^{{\ap}}_{\mu}(S^{{\ap}}_{\nu})^{\ast}:\mu,\nu\in\Lambda\}$ is dense in
$C^{\ast}_{{\min}}(\Lambda)$, we now have
\[
\Phi_{S^{{\ap}}}(a)=\sum_{x\in{\partial\Lambda}^{{\ap}}}\langle a \xi_x\vert\xi_x\rangle\theta_{\xi_x,\xi_x}\text{ for all }a\in C^{\ast}_{{\min}}(\Lambda).\qedhere
\]
\end{proof}

\begin{theorem}\label{main}
Let $(\Lambda, d)$ be an aperiodic finitely aligned $k$-graph. Then
 $C^{\ast}_{min}(\Lambda):=C^{\ast}(S^{{\ap}})$ is co-universal in the
sense that given any Toeplitz-Cuntz-Krieger $\Lambda$-family
$\{t_{\lambda}:\lambda\in\Lambda\}$ in which $t_v$ is nonzero for each $v\in{\Lambda}^0$, there is
a homomorphism $\psi_{t}:C^{\ast}(t)\rightarrow C^{\ast}_{min}(\Lambda)$ such that
$\psi_t(t_{\lambda})=S^{{\ap}}_{\lambda}$ for all $\lambda\in\Lambda$.

The pair $(C^{\ast}_{{\min}}(\Lambda), S^{{\ap}})$ is unique up to canonical isomorphism: if $B$ is
$C^{\ast}$-algebra generated by Toeplitz-Cuntz-Kriger $\Lambda$-family
$\{t_{\lambda}:\lambda\in\Lambda\}$ with the same co-universal property, then there is an
isomorphism $C^{\ast}_{{\min}}(\Lambda)\cong B$ which carries each $S^{{\ap}}_{\lambda}$ to
$t_{\lambda}$.
\end{theorem}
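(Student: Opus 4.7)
The plan is to obtain $\psi_t$ via the universal property of the Toeplitz algebra. Both the given family $\{t_\lambda\}$ and the canonical family $\{S^{\ap}_\lambda\}$ induce surjections $\pi_t : \mathcal{T}C^*(\Lambda) \to C^*(t)$ and $\pi_{S^{\ap}} : \mathcal{T}C^*(\Lambda) \to C^*_{\min}(\Lambda)$, and the desired homomorphism $\psi_t$ with $\psi_t(t_\lambda) = S^{\ap}_\lambda$ will exist as soon as $\ker \pi_t \subseteq \ker \pi_{S^{\ap}}$. The heart of the argument is to establish this kernel inclusion; the uniqueness clause will then follow by a standard two-sided co-universality argument.

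To set up the kernel inclusion I first need Proposition~\ref{hm} applied to the boolean representation $q_\lambda := t_\lambda t^*_\lambda$, which requires that $q_\lambda \ne 0$ for every $\lambda$. This follows from the hypothesis that each $t_v$ is nonzero together with (TCK3) applied with $\mu = \nu = \lambda$: the sum collapses to $t^*_\lambda t_\lambda = t_{s(\lambda)} \ne 0$, forcing $t_\lambda \ne 0$ and hence $q_\lambda \ne 0$. Proposition~\ref{hm} then supplies a homomorphism $\psi_q$ with $\psi_q(q_\lambda) = P^{\ap}_\lambda$, and Proposition~\ref{CE} gives a conditional expectation $\Phi_t$ on $C^*(t)$ satisfying the intertwining identity $\psi_q \circ \Phi_t \circ \pi_t = \Phi_{S^{\ap}} \circ \pi_{S^{\ap}}$.

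With those tools in place the key step is short: given $a \in \ker \pi_t$, the element $\pi_t(a^*a)$ vanishes, so $\Phi_t(\pi_t(a^*a)) = 0$, and applying $\psi_q$ and the intertwining identity yields $\Phi_{S^{\ap}}(\pi_{S^{\ap}}(a^*a)) = 0$. Since $\pi_{S^{\ap}}(a^*a) \ge 0$ and since Lemma~\ref{pos} asserts that $\Phi_{S^{\ap}}$ is faithful on the positive cone of $C^*_{\min}(\Lambda)$, we conclude $\pi_{S^{\ap}}(a^*a) = 0$ and therefore $\pi_{S^{\ap}}(a) = 0$. So $\ker \pi_t \subseteq \ker \pi_{S^{\ap}}$, and $\psi_t$ descends to the quotient as required.

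For uniqueness, suppose $B$ is another $C^*$-algebra with the same co-universal property, generated by a TCK family $\{t_\lambda\}$ with each $t_v$ nonzero. Co-universality of $B$ applied to the family $\{S^{\ap}_\lambda\}$ (whose vertex projections are nonzero by Proposition~\ref{CCK}) produces a homomorphism $C^*_{\min}(\Lambda) \to B$ sending $S^{\ap}_\lambda$ to $t_\lambda$, and co-universality of $C^*_{\min}(\Lambda)$ produces an inverse $B \to C^*_{\min}(\Lambda)$ sending $t_\lambda$ to $S^{\ap}_\lambda$. These agree with the identity on generators and hence compose to the identity on each algebra, giving mutually inverse isomorphisms. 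I expect the kernel inclusion to be the main obstacle, but essentially all the hard calculations are already packaged in Proposition~\ref{CE} and Lemma~\ref{pos}; what remains is the bookkeeping needed to thread the intertwining identity through faithfulness on positive elements.
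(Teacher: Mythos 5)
Your proposal is correct and follows essentially the same route as the paper: establish $\ker\pi_t\subseteq\ker\pi_{S^{\ap}}$ by combining the homomorphism $\psi_q$ of Proposition~\ref{hm}, the intertwining identity~\eqref{exp} from Proposition~\ref{CE}, and faithfulness of $\Phi_{S^{\ap}}$ on positives from Lemma~\ref{pos}, then deduce uniqueness by the standard two-sided co-universality argument (the paper defers this to \cite[Theorem~3.1]{SW}). Your explicit justification that each $q_\lambda\neq 0$ via (TCK3) is a detail the paper leaves implicit, but the argument is the same.
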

\begin{proof}
Let $\{t_{\lambda}:\lambda\in\Lambda\}$ be a Toeplitz-Cuntz-Krieger $\Lambda$-family
 with each $t_v$ nonzero. We will show that
$\text{ker}(\pi_t)\subset\text{ker}(\pi_{S^{{\ap}}})$ in $\mathcal{T}C^{\ast}(\Lambda)$.

Since each of $t^{\ast}_{\lambda}t_{\lambda}\ne 0$, Propositions \ref{hm} and \ref{CE} imply that
there is a homomorphism $\psi_q$ from $\clsp\{t_{\lambda}t^*_{\lambda}:\lambda\in\Lambda\}$ to
$\clsp\{P^{{\ap}}_{\lambda}:\lambda\in\Lambda\}$ taking each $t_{\lambda}t^*_{\lambda}$ to
$P^{{\ap}}_{\lambda}$. So, we have
\begin{equation*}\begin{split}
\pi_t(a)=0&\Longleftrightarrow\pi_t(a^{\ast}a)=0\\
&\Longrightarrow \psi_q\circ\Phi_t\circ\pi_t(a^{\ast}a)=0\\
&\Longleftrightarrow(\Phi_{S^{{\ap}}}\circ\pi_{S^{{\ap}}})(a^{\ast}a)=0\;\;\text{by (\ref{exp})}\\
&\Longleftrightarrow \pi_{S^{{\ap}}}(a^{\ast}a)=0\;\;\text{by Lemma \ref{pos}}\\
&\Longleftrightarrow \pi_{S^{{\ap}}}(a)=0.
\end{split}
\end{equation*}
Thus, $\text{ker}(\pi_t)\subset\text{ker}(\pi_{S^{{\ap}}})$. Therefore, there exists a well-defined
homomorphism $\psi_t:C^{\ast}(t)\rightarrow C^{\ast}(S^{{\ap}}):=C^{\ast}_{{\min}}(\Lambda)$.

For the final statement of the theorem, we use the same argument as in the proof of \cite[Theorem 3.1]{SW}.
\end{proof}

\begin{remark}
We have denoted both the homomorphism of commutative algebras arising in Proposition~\ref{hm} and
the homomorphism arising in Theorem~\ref{main} by $\psi$. This notation is compatible:
Lemma~\ref{lem2} shows that each Toeplitz-Cuntz-Krieger $\Lambda$-family $\{t_\lambda : \lambda \in
\Lambda\}$ determines a boolean representation $\{q_\lambda : \lambda \in \Lambda\}$ of $\Lambda$,
and then the homomorphism $\psi_t$ agrees upon restriction to $\clsp\{s_\lambda s^*_\lambda :
\lambda \in \Lambda\}$ with $\psi_q$.
\end{remark}

\begin{theorem}\label{thm:injectivity}
Let $(\Lambda, d)$ be an aperiodic finitely aligned $k$-graph with no sources.
\begin{enumerate}
\item If $\{t_{\lambda}:\lambda\in\Lambda\}$ is a Toeplitz-Cuntz-Krieger $\Lambda$-family with
    each $t_v$ nonzero, then the homomorphism $\psi_t$ of Theorem \ref{main} is injective if
    and only if
    \begin{enumerate}
    \item\label{it:CKrel} $\prod_{\lambda\in F}(t_v-t_{\lambda}t^{\ast}_{\lambda})=0$
        whenever $v\in{\Lambda}^0$ and $F\subset v{\Lambda}^0$ is finite exhaustive; and
    \item\label{it:FCE} the expectation $\Phi_t$ is faithful.
    \end{enumerate}
\item If $\phi$ is a homomorphism from $C^{\ast}_{{\min}}(\Lambda)$ to a $C^{\ast}$-algebra $C$
    such that each $\phi(P^{{\ap}}_v)$ is nonzero, then $\phi$ is injective.
\end{enumerate}
\end{theorem}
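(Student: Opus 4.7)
The plan is to build both parts from the factorization $\psi_q \circ \Phi_t \circ \pi_t = \Phi_{S^{\ap}} \circ \pi_{S^{\ap}}$ of~\eqref{exp}, the faithfulness of $\Phi_{S^{\ap}}$ (Lemma~\ref{pos}), and the co-universal property (Theorem~\ref{main}). Since $\pi_t$ is surjective and $\pi_{S^{\ap}} = \psi_t \circ \pi_t$, that identity will descend to the key diagram $\psi_q \circ \Phi_t = \Phi_{S^{\ap}} \circ \psi_t$ on $C^*(\{t_\lambda\})$, from which everything else flows.

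For part (2), I would observe that $S^{\ap}_v = P^{\ap}_v$ is a projection, so the hypothesis makes $\{\phi(S^{\ap}_\lambda) : \lambda \in \Lambda\}$ into a Toeplitz-Cuntz-Krieger $\Lambda$-family with each vertex projection nonzero. Applying Theorem~\ref{main} to this family will produce a homomorphism $\psi : \phi(C^*_{\min}(\Lambda)) \to C^*_{\min}(\Lambda)$ sending $\phi(S^{\ap}_\lambda) \mapsto S^{\ap}_\lambda$, so $\psi \circ \phi$ fixes every generator of $C^*_{\min}(\Lambda)$ and must therefore be the identity; hence $\phi$ is injective.

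For part (1), I would first promote the vertex-level condition~(a) to the general shape required by Proposition~\ref{hm}. Since $\MCE(\lambda,\lambda) = \{\lambda\}$, (TCK3) gives $t^*_\lambda t_\lambda = t_{s(\lambda)}$; combined with $t_\lambda t_\mu = t_{\lambda\mu}$ and the mutual commutation of the projections $t_{s(\lambda)} - t_\mu t^*_\mu$, left- and right-multiplication by $t_\lambda$ and $t^*_\lambda$ will telescope the vertex-level relation $\prod_{\mu \in F}(t_{s(\lambda)} - t_\mu t^*_\mu) = 0$ into $\prod_{\mu \in F}(t_\lambda t^*_\lambda - t_{\lambda\mu} t^*_{\lambda\mu}) = 0$ for every $\lambda \in \Lambda$ and every finite exhaustive $F \subseteq s(\lambda)\Lambda$. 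Proposition~\ref{hm} will then say that (a) is equivalent to injectivity of $\psi_q$. The two directions of the equivalence are now diagram chases. For ($\Rightarrow$), if $\psi_t$ is injective then applying $\psi_t$ to $\prod_{\lambda \in F}(t_v - t_\lambda t^*_\lambda)$ yields the analogous product in $\{S^{\ap}_\lambda\}$, which vanishes by Proposition~\ref{CCK}, giving~(a); and if $\Phi_t(a^*a) = 0$ then the key diagram yields $\Phi_{S^{\ap}}(\psi_t(a^*a)) = 0$, whence Lemma~\ref{pos} and injectivity of $\psi_t$ force $a = 0$, giving~(b). For ($\Leftarrow$), assuming (a) and (b), if $\psi_t(a) = 0$ then $\psi_q(\Phi_t(a^*a)) = \Phi_{S^{\ap}}(\psi_t(a^*a)) = 0$ via the key diagram; injectivity of $\psi_q$ (from~(a)) will give $\Phi_t(a^*a) = 0$; and faithfulness of $\Phi_t$ (from~(b)) will force $a = 0$.

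The only genuinely technical step will be the short telescoping argument promoting the vertex-level (CK)-relation to its general form, needed to invoke Proposition~\ref{hm}; everything else is a clean chase through $\psi_q \circ \Phi_t = \Phi_{S^{\ap}} \circ \psi_t$, calling on Lemma~\ref{pos} or Theorem~\ref{main} at the appropriate moments.
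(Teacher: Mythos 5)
Your proposal is correct and is essentially the argument the paper intends: the paper's own ``proof'' simply defers to \cite[Theorem~3.2]{SW}, and your diagram chase through $\psi_q \circ \Phi_t = \Phi_{S^{\ap}} \circ \psi_t$, together with the telescoping step $\prod_{\mu\in F}(t_\lambda t^*_\lambda - t_{\lambda\mu}t^*_{\lambda\mu}) = t_\lambda\bigl(\prod_{\mu\in F}(t_{s(\lambda)} - t_\mu t^*_\mu)\bigr)t^*_\lambda$ (which is exactly Lemma~\ref{lem2}) and the co-universal trick for part~(2), is precisely that argument transported to the $k$-graph setting. No gaps.
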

\begin{proof}
The proof is essentially the same as that of \cite[Theorem 3.2]{SW}.
\end{proof}

\begin{corollary}[The Cuntz-Krieger uniqueness theorem]
Let $(\Lambda,d)$ be an aperiodic finitely aligned $k$-graph, and let $C^{\ast}(\Lambda)$ be the
Cuntz-Krieger algebra of \cite{RSY2}. There is an isomorphism $C^{\ast}(\Lambda)\cong
C^{\ast}_{\text{min}}(\Lambda)$ which carries each $s_{\lambda}$ to $S^{\ap}_{\lambda}$. Moreover,
every Toeplitz-Cuntz-Krieger $\Lambda$-family $\{t_{\lambda}:\lambda\in\Lambda\}$ in which each
$t_v$ is nonzero, and which satisfies condition (\ref{it:CKrel}) of Theorem \ref{thm:injectivity}
generates an isomorphic copy of $C^*(\Lambda)$. In particular, for Toeplitz-Cuntz-Krieger families,
Condition~(\ref{it:CKrel}) of Theorem \ref{thm:injectivity} implies Condition~(\ref{it:FCE}).
\end{corollary}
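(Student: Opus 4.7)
The plan is to derive all assertions from Theorems~\ref{main} and~\ref{thm:injectivity}.

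First I would establish the isomorphism $C^{\ast}(\Lambda)\cong C^{\ast}_{\min}(\Lambda)$. The universal Cuntz--Krieger family $\{s_\lambda:\lambda\in\Lambda\}$ generating $C^{\ast}(\Lambda)$ is in particular a Toeplitz--Cuntz--Krieger family with each $s_v\ne 0$ which satisfies~(CK), so Theorem~\ref{main} produces a surjective homomorphism $\psi_s:C^{\ast}(\Lambda)\to C^{\ast}_{\min}(\Lambda)$ with $\psi_s(s_\lambda)=S^{\ap}_\lambda$. For injectivity I would invoke Theorem~\ref{thm:injectivity}(1): condition~(\ref{it:CKrel}) is immediate, while for condition~(\ref{it:FCE})---faithfulness of $\Phi_s$ on $C^{\ast}(\Lambda)$---I would use the co-universal property (Theorem~\ref{main}) applied to the rescaled Cuntz--Krieger families $\{z^{d(\lambda)}S^{\ap}_\lambda:\lambda\in\Lambda\}$, one for each $z\in\TT$, to equip $C^{\ast}_{\min}(\Lambda)$ with a gauge action for which $\psi_s$ is equivariant, and then apply the gauge-invariant uniqueness theorem for $C^{\ast}(\Lambda)$ from~\cite{RSY2}, which is established independently of Cuntz--Krieger uniqueness.

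Given this isomorphism, the second assertion follows formally. Let $\{t_\lambda:\lambda\in\Lambda\}$ be a Toeplitz--Cuntz--Krieger family with each $t_v\ne 0$ satisfying~(CK). Since $\{t_\lambda\}$ is then a Cuntz--Krieger family, the universal property of $C^{\ast}(\Lambda)$ gives a surjection $\phi:C^{\ast}(\Lambda)\to C^{\ast}(\{t_\lambda:\lambda\in\Lambda\})$ with $s_\lambda\mapsto t_\lambda$, while Theorem~\ref{main} gives $\psi_t:C^{\ast}(\{t_\lambda:\lambda\in\Lambda\})\to C^{\ast}_{\min}(\Lambda)$ with $t_\lambda\mapsto S^{\ap}_\lambda$. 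The composition $\psi_t\circ\phi$ agrees with $\psi_s$ on the generators $\{s_\lambda\}$, so $\psi_t\circ\phi=\psi_s$; as $\psi_s$ is an isomorphism and $\phi$ is surjective, $\phi$ must be injective, whence both $\phi$ and $\psi_t=\psi_s\circ\phi^{-1}$ are isomorphisms. Finally, for the ``in particular'' statement, the injectivity of $\psi_t$ combined with the equivalence in Theorem~\ref{thm:injectivity}(1) forces condition~(\ref{it:FCE}), i.e., $\Phi_t$ is faithful.

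The main obstacle is the injectivity of $\psi_s$ in the first paragraph. One cannot simply quote faithfulness of the canonical expectation on the universal algebra $C^{\ast}(\Lambda)$, since this is essentially equivalent to the Cuntz--Krieger uniqueness theorem we are proving; the care required is to construct the gauge action on $C^{\ast}_{\min}(\Lambda)$ purely from the co-universal machinery of Theorem~\ref{main} before comparing with $C^{\ast}(\Lambda)$ via the gauge-invariant uniqueness theorem, thereby avoiding any circularity.
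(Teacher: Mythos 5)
Your proposal is correct and follows the same overall skeleton as the paper's proof (co-universality of $C^*_{\min}(\Lambda)$ gives $\psi_s$; injectivity of $\psi_s$ is the crux; the statements about general families satisfying~(\ref{it:CKrel}) then follow by the same formal two-homomorphism argument the paper uses), but you handle the crucial injectivity step differently. The paper verifies condition~(\ref{it:FCE}) of Theorem~\ref{thm:injectivity} directly, by observing that $\Phi_s$ on $C^*(\Lambda)$ factors as the faithful gauge-averaging expectation onto the AF core followed by the faithful diagonal expectation on that core, and then quotes Theorem~\ref{thm:injectivity}(1). You instead use the co-universal property itself, applied to the rescaled families $\{z^{d(\lambda)}S^{\ap}_\lambda\}$ (note this should be $z\in\TT^k$ with $z^n=\prod_i z_i^{n_i}$, not $z\in\TT$), to manufacture a gauge action on $C^*_{\min}(\Lambda)$ intertwined by $\psi_s$, and then invoke the gauge-invariant uniqueness theorem of \cite{RSY2}; this gives injectivity of $\psi_s$ outright, and your stated worry about circularity is correctly resolved since the GIUT in \cite{RSY2} is proved via the core and does not rely on Cuntz--Krieger uniqueness. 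What each route buys: the paper's argument needs only faithfulness of the two expectations (a lighter input than the full GIUT) but leans on the AF-core structure theory and on Theorem~\ref{thm:injectivity}; yours trades that for the GIUT plus a short continuity check that $z\mapsto\gamma_z$ is strongly continuous (which you should say a word about --- the standard $\varepsilon/3$ argument on the spanning elements $S^{\ap}_\mu(S^{\ap}_\nu)^*$), and it illustrates the useful general principle that co-universal algebras automatically carry gauge actions. One small point common to both proofs: Theorem~\ref{thm:injectivity} is stated for $k$-graphs with no sources while the corollary is not, so for the ``in particular'' assertion (which both you and the paper extract from the equivalence in Theorem~\ref{thm:injectivity}(1)) one should either add that hypothesis or check that the relevant implication of Theorem~\ref{thm:injectivity}(1) survives without it.
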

\begin{proof}
By \cite[Proposition~2.12]{RSY2}, $C^*(\Lambda)$ is generated by a Toeplitz-Cuntz-Krieger family
$\{s_{\lambda}:\lambda\in\Lambda\}$ in which each $s_v$ is nonzero. Hence Theorem~\ref{main}
implies that there is a homomorphism $\psi_s$ from $C^*(\Lambda)$ to $C^*_{{\min}}(\Lambda)$
carrying each $s_\lambda$ to $S^{{\ap}}_\lambda$. The map $\Phi_s$ agrees with that obtained by
first averaging over the gauge-action on $C^*(\Lambda)$ onto its AF core, and then applying the
canonical expectation from the AF algebra onto its diagonal subalgebra. Since each of these
expectations is faithful, so is $\Phi_s$. We have $\prod_{\lambda\in
F}(s_v-s_{\lambda}s^{\ast}_{\lambda})=0$ whenever $v\in{\Lambda}^0$ and $F\subset v{\Lambda}^0$ is
finite exhaustive by (CK).
 Hence Theorem~\ref{thm:injectivity} implies that
$\pi_s$ is an isomorphism.

Now fix a Cuntz-Krieger $\Lambda$-family $\{t_\lambda : \lambda \in \Lambda\}$. The universal
property of $C^*(\Lambda)$ gives a homomorphism $\rho_t : C^*(\Lambda) \to C^*(t)$ satisfying
$\rho_t(s_\lambda) = t_\lambda$. The co-universal property of $C^*_{{\min}}(\Lambda)$ gives a
homomorphism $\psi_t : C^*(t) \to C^*_{{\min}}(\Lambda)$ satisfying $\psi_t(t_\lambda) =
S^{{\ap}}_\lambda$. Since $\psi_s^{-1} \circ \psi_t$ is an inverse for $\rho_t$, the result
follows.
\end{proof}

\end{document}